%% Hans Havlicek November 4, 2019
%% Format = Neutral  PDFLaTeX
%% Bibliography included, final version June 6, 2020.

\documentclass[12pt,a4paper]{article}

\usepackage{graphicx}%
\usepackage{amsmath,amsthm,amsfonts,amssymb} %% pmatrix proof environments
\usepackage{txfonts,eucal} %%  ,bm for bold math
\usepackage{color}%% during work only
%\usepackage{hyperref}
%\usepackage{refcheck}%% shows labels and bib keys
%\usepackage{mathtools} %% \overbracket{}

%% Comments and changes in colors
%% by Hans
%% by Silvia
%% by Stefano
%% to be deleted
%% just for highlightning
%% because the color green looks terrible on my screen ;) Hans

%% activate to see only the current math text
%\renewcommand{\hans}[1]{}%% do not display this text
%\renewcommand{\silvia}[1]{}%% do not display this text
%\renewcommand{\stefano}[1]{}%% do not display this text
%\renewcommand{\canc}[1]{}%% do not display this text

%\newcommand{\hansneu}[1]{{\color{red}  #1}}%% by Hans
%\newcommand{\cancneu}[1]{{\color[rgb]{1,0.5,0} #1}}%% to be deleted
%\newcommand{\neu}[1]{{\color{green} #1}}%% just for highlightning

%% Operators, activate as needed
\DeclareMathOperator\Aut{Aut}%
\DeclareMathOperator\Char{Char}%
\DeclareMathOperator\id{id}%
\DeclareMathOperator\End{End}%
\DeclareMathOperator\GL{GL}%
\DeclareMathOperator{\GammaL}{\Gamma L}
\DeclareMathOperator\Kernel{K}%
\DeclareMathOperator\tr{tr}
%\newcommand{\Trans}{\mathrm T}

%% Symbols
\newcommand{\li}{\langle}
\newcommand{\re}{\rangle}
\newcommand{\lire}{\li\,\cdot\,,\cdot\,\re}

\newcommand{\ol}[1]{\overline{#1}}
\newcommand{\x}{\times} %%

%% Parallelisms and equiv relations
\newcommand{\rip}{\mathrel{\parallel_{r}}}
\newcommand{\lep}{\mathrel{\parallel_{\ell}}}
\newcommand{\ripp}{\mathrel{\parallel_{r}'}}
\newcommand{\lepp}{\mathrel{\parallel_{\ell}'}}
%\newcommand{\Fp}{\mathrel{\parallel_{\,\mathcal{F}}}}
%\newcommand{\relpi}[1]{\mathrel{\pi_{#1}}}
%\newcommand{\lre}{\relpi{\ell r}}
%\newcommand{\equivF}{\ensuremath{\equiv_F}}
%%%%%%%%%%%%%%%%%%%%%%%%%
%% Parallel classes
\newcommand{\riS}{\cS_{r}}
\newcommand{\leS}{\cS_{\ell}}
%\newcommand{\lrC}{\cC_{\ell r}}
%\newcommand{\FS}{\cS_{\mathcal F}}
%%%%%%%%%%%%%%%%%%%%%%%%%%
%% Collineation groups preserving parallelisms
\newcommand{\Autle}{\ensuremath{\Gamma_{\ell}}}
\newcommand{\Autri}{\ensuremath{\Gamma_{r}}}
\newcommand{\Autp}{\ensuremath{\Gamma_\parallel}}
\newcommand{\inner}[1]{\ensuremath{\widetilde{#1}}}

\newcommand{\trenn}{{-\hspace{0pt}}}%% to allow hyphenations like complex-con-jugate
%\newcommand{\XX}{\phantom{-}}

%% proj R\"{a}ume und Gra{\ss}mann
%\newcommand{\bPU}{\bP(U)} %%
%\newcommand{\bPV}{\bP(V)} %%
\newcommand{\bPH}{\bP(H_F)} %% changed June 18
\newcommand{\dsp}{(\bP,{\lep},{\rip})} %% double spaces
\newcommand{\dspH}{\bigl(\bPH,{\lep},{\rip}\bigr)}
%% kalligraphisch
%\newcommand{\cPV}{\cP(V)} %%
%\newcommand{\cLV}{\cL(V)} %%
\newcommand{\cLH}{\cL(H_F)} %% changed June 18
\newcommand{\cAH}{\cA(H_F)} %%
%\newcommand{\cPVV}{{\cP(V\wedge V)}} %% doppelte Klammern!

%\newcommand{\overbracket}[1]{\ensuremath{\overset{\hbox{\rotatebox[origin=c]{-90}{[}}}{#1}}}

%% Buchstaben alleine

\renewcommand{\phi}{\varphi}

\newcommand{\cA}{{\mathcal A}} %% math calligraphic letters

\newcommand{\cF}{{\mathcal F}}

\newcommand{\cL}{{\mathcal L}}
\newcommand{\cM}{{\mathcal M}}

\newcommand{\cO}{{\mathcal O}}

\newcommand{\cS}{{\mathcal S}}

 %% math blackboard letters

\newcommand{\bP}{{\mathbb P}}

%\newcommand{\vA}{{\bm A}}%% math bold letters unused
%\newcommand{\vC}{{\bm C}}
%\newcommand{\vE}{{\bm E}}
%\newcommand{\vG}{{\bm G}}
%\newcommand{\vK}{{\bm K}}
%\newcommand{\vM}{{\bm M}}
%\newcommand{\vN}{{\bm N}}
%\newcommand{\vP}{{\bm P}}
%\newcommand{\vS}{{\bm S}}
%\newcommand{\vT}{{\bm T}}
%\newcommand{\vU}{{\bm U}}
%\newcommand{\vV}{{\bm V}}
%\newcommand{\vW}{{\bm W}}
%\newcommand{\vX}{{\bm X}}
%\newcommand{\vY}{{\bm Y}}
%
%\newcommand{\va}{{\bm a}}
%\newcommand{\vb}{{\bm b}}
%\newcommand{\ve}{{\bm e}}
%\newcommand{\vi}{{\bm i}}
%\newcommand{\vj}{{\bm j}}
%\newcommand{\vk}{{\bm k}}
%\newcommand{\vs}{{\bm s}}
%\newcommand{\vu}{{\bm u}}
%\newcommand{\vy}{{\bm y}}

%% Theorem environments
\newtheorem{thm}{Theorem}[section]
\newtheorem{prop}[thm]{Proposition}
\newtheorem{cor}[thm]{Corollary}
\newtheorem{lem}[thm]{Lemma}
\theoremstyle{definition}

\theoremstyle{remark}
\newtheorem{rem}[thm]{Remark}

%%%%%%%%%%%%%%% redefinition %%%%%%%%%%%%%%%%
%% is needed to allow references to (a), (b) etc.
\renewcommand{\theenumi}{\alph{enumi}}%
%
%%%%%%%%%%%%%%%%%%%%%%%%%%%%%%%%%%%%%%%%%%%%%%%%%%%

%%%%%%%%%%%%%%%%%%%%%%%%%%%%%%%%%%%%%%%%%%%%%%%%%%%%%%
\sloppy
\begin{document}

%%%%%%%%%%%%%%%%%%%%%%%%%%%%%%%%%%%%%%%%%%%%%%%%%%%%%%
\author{Hans Havlicek \and Stefano Pasotti\thanks{This work was partially
supported by GNSAGA of INdAM (Italy)} \and Silvia
Pianta\footnotemark[1]\,\,\thanks{This work was partially supported by
Universit\`{a} Cattolica del Sacro Cuore (Milano, Italy) in the framework of Call
D.1 2019}}
\title{Characterising Clifford parallelisms among Clifford-like parallelisms}
\date{\today}
%%\date{}
%%%%%%%%%%%%%%%%%%%%%%%%%%%%%%%%%%%%%%%%%%%%%%%%%%%%%%

\maketitle

\begin{center}
\textit{Dedicated to Mario Marchi on the occasion of his 80th birthday, in
friendship}
\end{center}

\begin{abstract}
We recall the notions of \emph{Clifford} and \emph{Clifford-like} parallelisms
in a $3$-dimensional projective double space. In a previous paper the authors
proved that the linear part of the full automorphism group of a Clifford
parallelism is the same for all Clifford-like parallelisms which can be
associated to it. In this paper, instead, we study the action of such group on
parallel classes thus achieving our main results on characterisation of the
Clifford parallelisms among Clifford-like ones.
\par~\par\noindent
\textbf{Mathematics Subject Classification (2010):} 51A15, 51J15 \\
\textbf{Key words:} Clifford parallelism, Clifford-like parallelism, projective
double space, kinematic algebra, automorphism
\end{abstract}

%%%%%%%%%%%%%%%%%%%%%%%%%%%%%%%%%%%%%%%%%%%%%%%%%%%%%%%%%%%%%%%%%%%%%%%%%%%%
\section{Introduction}\label{se:intro}
%%%%%%%%%%%%%%%%%%%%%%%%%%%%%%%%%%%%%%%%%%%%%%%%%%%%%%%%%%%%%%%%%%%%%%%%%%%%

It is a widely used strategy in mathematics to define a new structure by
modifying a given one. The definition of a Clifford-like parallelism from
\cite{blunck+p+p-10a} and \cite{havl+p+p-19a}, which is recalled in
Section~\ref{se:clifford}, follows these lines. The starting point is a
\emph{projective double space} $\dsp$, that is, a projective space $\bP$
together with a \emph{left parallelism} $\lep$ and a \emph{right parallelism}
$\rip$ on its line set such that the so-called \emph{double space axiom} (DS)
is satisfied. The given parallelisms $\lep$ and $\rip$ are called the
\emph{Clifford parallelisms} of $\dsp$ in analogy to the classical example
arising from the three-dimensional elliptic space over the real numbers. The
parallel classes of $\lep$ and $\rip$ are then used to define parallelisms that
are \emph{Clifford-like} w.r.t.\ $\dsp$. Among them are the initially given
parallelisms $\lep$ and $\rip$. We restrict ourselves most of the time to the
case when $\bP$ is three-dimensional, and we make use of an algebraic
description of such a double space in terms of an appropriate four-dimensional
algebra $H$ over a commutative field $F$. Thereby we adopt the notation $\dspH$
and we have to distinguish two cases, \eqref{A} and \eqref{B}. In case
\eqref{A}, $H$ is a quaternion skew field with centre $F$, the left and right
parallelisms do not coincide and, in general, there are Clifford-like
parallelisms of $\dspH$ different from $\lep$ and $\rip$. In case \eqref{B},
$H$ is a commutative extension field of $F$ satisfying some extra property, and
${\lep}={\rip}$ is the only Clifford-like parallelism of $\dspH$. We include
case \eqref{B} for the sake of completeness and in order to obtain a unified
exposition that covers both cases, even though several of our results are
trivial in case \eqref{B}.

\par
In Section~\ref{se:auto+orbits} we study automorphisms of a Clifford-like
parallelism of a projective double space $\dspH$ being motivated by the
following result: if a projective collineation of $\bPH$ preserves \emph{at
least one} Clifford-like parallelism of $\dspH$, then \emph{all} its
Clifford-like parallelisms are preserved.\footnote{The situation gets intricate
when dealing with a non-projective collineation that preserves at least one
Clifford-like parallelism of $\dspH$. See the examples in
\cite[Sect.~4]{havl+p+p-20a}.} This follows from \cite[Thm.~3.5]{havl+p+p-20a}
in case \eqref{A} and holds trivially in case \eqref{B}. In our algebraic
setting these projective collineations are induced by $F$-linear
transformations of $H$ which are described in Subsection~\ref{subse:auto},
where we determine all $F$-semilinear automorphisms of the right parallelism.
In preparation for Section~\ref{se:main}, we exhibit for a quaternion skew
field $H$ the orbits of certain points and lines of $\bPH$ under the group of
inner automorphisms of $H$ and we determine all $\rip$-classes that are fixed
under a left translation of $H$.
\par
The main results are stated in Section~\ref{se:main}. In
Theorem~\ref{thm:double-double}, we consider a three-dimensional projective
space $\bP$ that is made into a double space in two ways. If there exists a
parallelism $\parallel$ on $\bP$ that is Clifford-like w.r.t.\ both double
space structures then the given double spaces coincide up to a change of the
attributes ``left'' and ``right'' in one of them. This finding improves
\cite[Thm.~4.15]{havl+p+p-19a} (see Corollary~\ref{cor:proper}) and it
simplifies matters considerably. Indeed, when dealing with a Clifford-like
parallelism, there is only one corresponding double space structure in the
background. In Theorems~\ref{thm:main}, \ref{thm:new1} and \ref{thm:new2} we
characterise the Clifford parallelisms among the Clifford-like parallelism of
$\dspH$ via the existence of automorphisms with specific properties. For
example, Theorem~\ref{thm:main} establishes that a Clifford-like parallelism of
$\dspH$ is Clifford precisely when it admits an automorphism that fixes all its
parallel classes and acts non-trivially on the point set of the projective
space $\bPH$.
\par
Next, let us emphasise that some of our investigations are in continuity with
classical results on \emph{dilatations} in \emph{kinematic spaces}. For
example, in our proof of Theorem~\ref{thm:main} we could use the fact that the
existence of a \emph{proper} non-trivial dilatation (namely a non-identical
collineation with a fixed point and the property that all parallel classes
remain invariant) is possible only in the commutative case, \emph{i.e.} in our
case \eqref{B} (see \cite[Teorema~2]{marchi+p-80b} or
\cite[(II.10)]{karz+m-84a}). We decided instead to include a short direct proof
in order to keep the paper self-contained. There are also neat connections to
the theory of \emph{Sperner spaces} and \emph{(generalised) translation
structures}; we refer the interested reader to \cite{bader+l-11a},
\cite{seier-71a}, \cite{seier-73a} and the many references given there.
\par
Finally, another remark seems appropriate. Any Clifford-like parallelism on the
three-dimensional real projective space is Clifford (see
Remark~\ref{rem:infinite}). The Clifford parallelisms on this space are the
only topological parallelisms that admit an automorphism group of dimension at
least $4$; see \cite{loew-19c} and the intimately related articles
\cite{bett+l-17a}, \cite{bett+r-14a}, \cite{loew-17y}, \cite{loew-17z}. In
contrast to our considerations, in this beautiful result only the ``size'' of
an automorphism group is taken into account and not its action on the parallel
classes.

%%%%%%%%%%%%%%%%%%%%%%%%%%%%%%%%%%%%%%%%%%%%%%%%%%%%%%%%%%%%%%%%%%%%%%%%%%%%
\section{Preliminaries on Clifford and Clifford-like parallelisms}\label{se:clifford}
%%%%%%%%%%%%%%%%%%%%%%%%%%%%%%%%%%%%%%%%%%%%%%%%%%%%%%%%%%%%%%%%%%%%%%%%%%%%

A \emph{parallelism} $\parallel$ on a projective space $\bP$ is an equivalence
relation on the set $\cL$ of lines such that each point of $\bP$ is incident
with precisely one line from each equivalence class. (If $\bP$ is a finite
projective space then a parallelism is also called a \emph{packing} or a
\emph{resolution}.) For each line $M\in\cL$ we write $\cS(M)$ for the
\emph{parallel class} of $M$, that is, the equivalence class containing $M$.
This notation arises quite naturally, since any parallel class is in fact a
\emph{spread} (of lines) of $\bP$. When considering several parallelisms, we
distinguish among the above notions and symbols by adding appropriate
attributes, subscripts or superscripts. We refer to \cite{betta+t+z-19a},
\cite[Ch.~17]{hirsch-85a}, \cite{john-03a}, \cite{john-10a} and
\cite[\S~14]{karz+k-88} for a wealth of results about parallelisms and further
references.
\par
Let $\bP$ and $\bP'$ be projective spaces with parallelisms $\parallel$ and
$\parallel'$, respectively and let $\kappa$ be a collineation of $\bP$ to
$\bP'$ such that, for all lines $M,N\in \cL$, $M\parallel N$ implies
$\kappa(M)\parallel' \kappa(N)$. Then $\kappa$ takes any $\parallel$-class to a
$\parallel'$-class by \cite[Lemma~2.1]{havl+p+p-20a}. Such a $\kappa$ is
frequently called an \emph{isomorphism}\footnote{A slightly different
terminology will be used when dealing with projective spaces over vector
spaces; see the first paragraph of Section~\ref{subse:auto}.} of
$(\bP,\parallel)$ to $(\bP',\parallel')$.
\par
Suppose that a projective space $\bP$ is endowed with two (not necessarily
distinct) parallelisms, a \emph{left} parallelism $\lep$ and a \emph{right}
parallelism $\rip$. Following \cite{kks-73}, $\dsp$ constitutes a
\emph{projective double space} if the following axiom is satisfied.

\begin{enumerate}
  \item[(DS)] For all triangles $p_0,p_1,p_2$ in $\bP$ there exists a
      common point of the lines $M_1$ and $M_2$ that are defined as
      follows. $M_1$ is the line through $p_2$ that is left parallel to the
      join of $p_0$ and $p_1$, $M_2$ is the line through $p_1$ that is
      right parallel to the join of $p_0$ and $p_2$.
\end{enumerate}
Given a projective double space $\dsp$ each of $\lep$ and $\rip$ is referred to
as a \emph{Clifford parallelism}\footnote{This definition does not include
Clifford parallelisms that arise from octonions (see \cite{blunck+k+s+s-18a},
\cite{vanb-68b}, \cite{vanb-68c}, \cite{vanb-68a}, \cite{vane-29a}). The
(generalised) Clifford parallelisms appearing in \cite[Kap.~12]{gier-82} and
\cite{tyrr+s-71a} are not fully covered.} of $\dsp$. More generally, a
\emph{Clifford-like parallelism} of $\dsp$ is defined as a parallelism
$\parallel$ on $\bP$ such that, for all $M,N\in \cL$, $M\parallel N$ implies
$M\lep N$ or $M\rip N$ (see \cite[Def.~3.2]{havl+p+p-19a}). Each parallel class
of a Clifford-like parallelism $\parallel$ of $\dsp$ is a left or a right
parallel class: see \cite[Thm.~3.1]{havl+p+p-19a}, where this topic appears in
the wider context of ``blends'' of parallelisms. A Clifford-like parallelism of
$\dsp$ is said to be \emph{proper} if it does not coincide with one of $\lep$
and $\rip$. In what follows, whenever we say that a parallelism $\parallel$ on
a projective space $\bP$ is Clifford (respectively Clifford-like) it is
intended that $\bP$ can be made into a double space $\dsp$ such that
$\parallel$ is one of its Clifford (respectively Clifford-like) parallelisms.
\par

An algebraic description---up to isomorphism---of \emph{all} projective double
spaces $\dsp$ that contain at least two distinct lines and satisfy the
so-called ``prism axiom'' was given in \cite{kks-73}. It is based on quaternion
skew fields and purely inseparable commutative field extensions of
characteristic two. According to \cite[Satz~1]{kks-74} and
\cite[Satz~2]{kroll-75}, the prism axiom appearing in \cite{kks-73} is
redundant; see also the surveys in \cite[\S~14]{karz+k-88} and
\cite[pp.\,112--115]{john-03a}. This is why we omit to consider this axiom
here. From now on we exhibit exclusively three-dimensional projective double
spaces.\footnote{In any other dimension (DS) implies ${\lep}={\rip}$, whence
proper Clifford-like parallelisms of $\dsp$ do not exist.} We therefore recall
only their algebraic description in the next few paragraphs.
\par
We adopt the following settings throughout this article: $F$ denotes a
commutative field and $H$ is an $F$-algebra with unit $1_H$ satisfying one of
the following conditions.
\begin{enumerate}%% Redefined!
\renewcommand{\theenumi}{\Alph{enumi}}%%
\item\label{A} $H$ is a quaternion skew field with centre $F1_H$.
\item\label{B} $H$ is a commutative field with degree $[H \mathbin{:}
    F1_H]=4$ and such that $h^2\in F1_H$ for all $h\in H$.
\end{enumerate}
In what follows, we identify any $f\in F$ with $f1_H\in H$, whence $F$ turns
into a subfield of $H$. If $E$ is a subfield of $H$, then $H$ is a left vector
space and a right vector space over $E$. We denote these spaces as ${}_E H$ and
$H_E$, respectively. Whenever $E$ is contained in the centre of $H$, we do not
distinguish between $_E H$ and $H_E$. In each of the cases \eqref{A} and
\eqref{B}, $H_F$ is an infinite \emph{kinematic} (or, in a different
terminology: \emph{quadratic}) $F${\trenn}algebra, \emph{i.e.},
\begin{equation}\label{eq:kinematic}
    h^2\in F + Fh \mbox{~~for all~~} h\in H .
\end{equation}
If \eqref{B} applies then the characteristic $\Char F$ equals two and $H$ is a
purely inseparable extension of $F$.
\par
All $F$-linear endomorphisms of $H_F$ constitute the $F$-algebra $\End(H_F)$.
The \emph{left regular representation} $\lambda\colon H\to \End(H_F)$ sends
each $h\in H$ to the mapping $\lambda(h)=:\lambda_h$ given as
$\lambda_h(x):=hx$ for all $x\in H$. The image $\lambda(H)$ is an isomorphic
copy of the field $H$ within $\End(H_F)$. The elements of the multiplicative
group\footnote{We abbreviate $H\setminus\{0\}$ as $H^*$ and use the same kind
of notation for any field.} $\lambda(H^*)=\GL(H_H)$ are the \emph{left
translations}. Similarly, the \emph{right regular representation} $\rho\colon
H\to \End(H_F)$ sends each $h\in H$ to $\rho(h)=:\rho_h$ given as $\rho_h(x):=
xh$ for all $x\in H$. In this way we obtain $\rho(H)$ as an antiisomorphic copy
of $H$ within $\End(H_F)$ and the group of \emph{right
translations}\footnote{Observe that the zero endomorphism $\lambda_0=\rho_0$ is
\emph{not} among the left and right translations.} $\rho(H^*)=\GL({}_HH)$. For
all $g,h\in H$, the mappings $\lambda_g$ and $\rho_h$ commute. The
multiplicative group $H^*$ admits the representation $\tilde{(\;\,)}\colon
H^*\to \GL(H_F)$ sending each $h\in H^*$ to
$\tilde{h}:=\lambda_h^{-1}\circ\rho_h$, which is an inner automorphism of the
field $H$. Clearly, in case \eqref{B} the group $\inner{H^*}$ comprises only
the identity $\id_H$.
\par
The \emph{projective space} on the vector space $H_F$, in symbols $\bPH$, is
understood to be the set of all subspaces of $H_F$ with \emph{incidence} being
symmetrised inclusion. We adopt the usual geometric terms: \emph{Points},
\emph{lines}, and \emph{planes} of $\bPH$ are the subspaces of $H_F$ with
vector dimension one, two, and three, respectively; the set of all lines is
written as $\cL(H_F)$. The following notions rely on $H_F$ being an
$F$-algebra. In $\bPH$, lines $M$ and $N$ are defined to be \emph{left
parallel}, $M\lep N$, if $\lambda_c(M)=N$ for some $c\in H^*$. Similarly, $M$
and $N$ are said to be \emph{right parallel}, $M\rip N$, if $\rho_c(M)=N$ for
some $c\in H^*$. Then $\dspH$ is a \emph{projective double space}. The
parallelisms $\lep$ and $\rip$ are distinct in case~\eqref{A} and identical in
case~\eqref{B}.

\begin{rem}\label{rem:algebras}
The left and right parallelism w.r.t.\ $(H,+,\cdot)$ are the same as the right
and left parallelism defined by the opposite field of $H$. So, from a geometric
point of view, the choice of the attributes ``left'' and ``right'' is
immaterial.
\par
The multiplication on the field $(H,+,\cdot)$ may be altered without changing
the associated projective double space $\dspH$. Let us choose any $e\in H^*$.
Then we can define a multiplication $\cdot^e$ on $H$ via $ x\cdot^e y := x\cdot
e^{-1}\cdot y$ for all $x,y\in H$. This makes $(H,+,\cdot^e)$ into an
$F$-algebra, which will briefly be written as $H^e$. The left translation
$\lambda_e$ (w.r.t.\ $H$) is an $F$-linear isomorphism of $H$ to $H^e$, whence
the arbitrarily chosen element $e\in H^*$ turns out to be the unit element of
$H^e$. The projective double spaces arising from the $F$-algebras $H$ and $H^e$
are the same, since $\lambda_{h}=\lambda^e_{h\cdot e}$ and
$\rho_{h}=\rho^e_{e\cdot h}$ for all $h\in H^*$.

Let us briefly sketch a more conceptual verification of our second observation.
The point $Fe$ and the parallelisms $\lep$ and $\rip$ can be used to make the
point set $\bPH$ into a two-sided incidence group with unit element $Fe$
\cite[\S 3]{kks-73}. (The prism axiom appearing in \cite{kks-73} can be avoided
\cite[Satz~1]{kks-74}, \cite[Satz~2]{kroll-75}.) Then, using the group
structure on $\bPH$, the $F$-vector space $H$ can be endowed with a
multiplication making it into a field with unit element $e$ (see
\cite[Satz~1]{elle+k-63a} and \cite[Hauptsatz]{waeh-67a}). This field, which
coincides with our $H^e$, therefore provides an alternative description of the
projective double space $\dspH$.
\end{rem}

\begin{rem}
There are various other ways to define a \emph{Clifford parallelism} on a
three-dimensional (necessarily pappian) projective space. We refer to
\cite{bett+r-12a}, \cite{blunck+p+p-10a}, \cite[p.~46]{hav95},
\cite[Sect.~2]{hav97}, \cite{havl-15}, \cite{havl-16a} and the references given
there. On that account, it is our aim to make use only of the above algebraic
approach.
\end{rem}

\par
Let $\cAH\subset\cLH$ denote the star of lines with centre $F1$. By
\eqref{eq:kinematic}, each line $L\in\cAH$ is readily seen to be a maximal
commutative subfield of $H$ and hence an $F$-subalgebra. Next, we recall an
explicit construction that gives \emph{all} Clifford-like parallelisms of
$\dspH$. Upon choosing any $\inner{H^*}$-invariant subset $\cF\subseteq\cAH$,
one obtains a partition of $\cL(H_F)$ by taking the left parallel classes of
all lines in $\cal F$ and the right parallel classes of all lines in
$\cAH\setminus\cF$. This partition determines an equivalence relation, which
turns out to be a Clifford-like parallelism $\parallel$ of $\dspH$. See
\cite[Thm.~4.10]{havl+p+p-19a} for a proof in the case when \eqref{A} applies;
in case \eqref{B} the result is trivial due to ${\parallel}={\lep}={\rip}$.

\begin{rem}\label{rem:kernel}
Let $\parallel$ be any parallelism on $\bPH$ and let $\cS(M)$, $M\in\cL(H_F)$,
be one of its parallel classes. We recall that the \emph{kernel} of the spread
$\cS(M)$ consists of all endomorphisms $\phi$ of the abelian group $(H,+)$ such
that $\phi(N)\subseteq N$ for all $N\in\cS(M)$. This kernel, which will be
denoted by $\Kernel\bigl(H,\cS(M)\bigr)$, is a field; see, for example,
\cite[Thm.~1.6]{luen-80a}. Consequently, if
$\phi\in\Kernel\bigl(H,\cS(M)\bigr)$ and $\phi\neq0$, then $\phi(N)=N$ for all
$N\in\cS(M)$. The following simple reasoning will repeatedly be used. If
$\phi_1,\phi_2\in \Kernel\bigl(H,\cS(M)\bigr)$ satisfy $\phi_1(g)=\phi_2(g)$
for some $g\in H^*$, then $(\phi_1-\phi_2)(g)=0$ forces that $\phi_1-\phi_2$ is
not injective. Therefore $\phi_1-\phi_2$ is the zero endomorphism or, in other
words, $\phi_1=\phi_2$.
\end{rem}

\begin{prop}\label{prop:two}
If Clifford parallelisms\/ $\parallel$ and\/ $\parallel'$ on a
three-dimensional projective space have two distinct parallel classes in
common, then these parallelisms coincide.
\end{prop}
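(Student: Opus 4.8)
The plan is to show that a Clifford parallelism can be reconstructed, in a way that refers only to the underlying projective space and to two of its parallel classes, from \emph{any} two of its classes; feeding the two common classes of $\parallel$ and $\parallel'$ into this reconstruction then forces the two parallelisms to agree on the class of every line. By Remark~\ref{rem:algebras} I may assume $\parallel={\lep}$ for an $F$-algebra $H$ as in \eqref{A} or \eqref{B}, and I identify $\bP$ with $\bPH$. Since $\parallel'$ is Clifford as well, it is likewise induced by a second $F$-algebra multiplication carried onto the same additive group $(H,+)$ via its defining collineation, with left and right translations $\lambda',\rho'$, so that every translation and every spread kernel occurring below is an endomorphism of $(H,+)$. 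Write $\cA\neq\cB$ for the two common classes.

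The reconstruction proceeds as follows. First, for each $L\in\cAH$ the left parallel class $\leS(L)=\{\lambda_cL\mid c\in H^*\}$ is exactly the spread of $1$-dimensional right-$L$-subspaces of $H_L$; hence every $\rho_\ell$ with $\ell\in L^*$ fixes each of its lines, giving $\rho(L)\subseteq\Kernel(H,\leS(L))$, and equality holds because $\leS(L)$ is a regular spread whose kernel is a quadratic extension of $F\id$ (Remark~\ref{rem:kernel}, \cite[Thm.~1.6]{luen-80a}), while $\rho(L)\cong L$ already has that degree. As $\cA$ and $\cB$ are left parallel classes they equal $\leS(L_1)$ and $\leS(L_2)$ for the lines $L_1,L_2\in\cAH$ of $\cA,\cB$ through $F1$, and $\cA\neq\cB$ forces $L_1\neq L_2$. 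Being distinct $F$-subalgebras of degree $2$ that meet in $F1$, $L_1$ and $L_2$ generate the four-dimensional algebra $H$, so the subring generated by $\Kernel(H,\cA)\cup\Kernel(H,\cB)=\rho(L_1)\cup\rho(L_2)$ equals $\rho(H)$. Finally, an endomorphism $\phi$ of $(H,+)$ that commutes with every $\rho_h$ satisfies $\phi(h)=\phi(1\cdot h)=\phi(1)h$, i.e.\ $\phi=\lambda_{\phi(1)}$; hence the centraliser of $\rho(H)$ equals $\lambda(H)$, and by definition the $\parallel$-class of a line $M$ is the orbit $\lambda(H^*)M$.

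Combining these facts, the $\parallel$-class of any line $M$ is the orbit of $M$ under the group of units of the centraliser of the subring generated by $\Kernel(H,\cA)\cup\Kernel(H,\cB)$. The decisive point is that these two kernels depend only on the partitions $\cA,\cB$ and on $(H,+)$, not on the chosen multiplication. I then run the identical recipe for $\parallel'$: its two classes through the $\parallel'$-unit are again degree-$2$ subalgebras generating the corresponding algebra, and the kernels of $\cA,\cB$ computed there are the very same subrings $\Kernel(H,\cA),\Kernel(H,\cB)$ as before. If $\parallel'$ happens to be a right parallelism the roles of $\lambda'$ and $\rho'$ merely interchange, but the outcome is unchanged: the $\parallel'$-class of $M$ is once more the orbit of $M$ under the units of the centraliser of the subring generated by $\Kernel(H,\cA)\cup\Kernel(H,\cB)$. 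Since both parallelisms start from the same pair of kernels, they produce the same subring, the same centraliser, and hence the same orbit decomposition of $\cLH$; thus they assign the same class to every line and $\parallel={\parallel'}$. In case \eqref{B} the multiplication is commutative and ${\lep}={\rip}$, so the same computation applies verbatim with $\lambda(H)=\rho(H)$.

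The step I expect to be the main obstacle is pinning down the kernel from above, i.e.\ proving $\Kernel(H,\leS(L))=\rho(L)$ rather than merely ``$\supseteq$'', so that the two kernels are genuinely the quadratic fields $\rho(L_1),\rho(L_2)$; hand in hand with this goes the generation statement that two \emph{distinct} degree-$2$ subalgebras span the whole four-dimensional algebra. This last point is exactly where the hypothesis of two distinct common classes is indispensable: a single common class recovers only one proper subalgebra, whose centraliser is strictly larger than $\lambda(H)$, so that one class alone does not determine the parallelism.
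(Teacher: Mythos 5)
Your proof is correct, and it rests on the same two pillars as the paper's argument: the identification of the kernel of a Clifford parallel class with the regular representation of a quadratic subfield (this is the paper's Lemma~\ref{lem:kernel}, which you re-derive), and the observation that two distinct lines of $\cAH$ generate the whole algebra $H$. Where you genuinely differ is in how the argument is closed. The paper first arranges, via Remark~\ref{rem:algebras}, that both multiplications share the unit element $1$, and then uses Remark~\ref{rem:kernel} to deduce $\lambda_z=\lambda_z'$ from $\lambda_z(1)=z=\lambda_z'(1)$ for all $z\in L_1\cup L_2$, so that the two multiplications literally coincide. You instead package the conclusion as an intrinsic reconstruction: a Clifford parallelism is the orbit decomposition of $\cLH$ under the unit group of the centraliser, in the endomorphism ring of $(H,+)$, of the subring generated by the kernels of any two of its classes; since those kernels depend only on $(H,+)$ and the two common spreads, both parallelisms produce the same decomposition. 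This buys you independence from the choice of unit element (you never need the two algebras to share a unit, only the easy extra computation that the centraliser of $\rho(H)$ is $\lambda(H)$), at the price of a weaker intermediate statement ($\rho(H)=\rho'(H)$ as subrings, rather than ${\cdot}={\cdot'}$), which still suffices. Two minor inaccuracies that do not affect validity: the bound $\bigl[\Kernel\bigl(H,\leS(L)\bigr):F\id_H\bigr]\le 2$ does not come from regularity of the spread but simply from each component being a $2$-dimensional $F$-space that is a nonzero vector space over the kernel; and the reduction to an algebra subject to \eqref{A} or \eqref{B} uses the classification of projective double spaces from \cite{kks-73}, not only Remark~\ref{rem:algebras}.
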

\begin{proof}
By virtue of the algebraic description of all projective double spaces and by
Remark~\ref{rem:algebras}, we may assume the following. The parallelism
$\parallel$ is the right parallelism $\rip$ coming from an $F$-algebra
$(H,+,\cdot)$ subject to \eqref{A} or \eqref{B}. There is a multiplication
${\cdot'}\colon H\times H\to H$ making the $F$-vector space $H_F$ into an
$F$-algebra $(H,+,\cdot')$ subject to \eqref{A} or \eqref{B} such that
$\parallel'$ coincides with the right parallelism $\ripp$ arising from
$(H,+,\cdot')$. These algebras share a common unit element $1\in H^*$, say.
\par
By our assumption, there are distinct lines $L_1,L_2\in\cAH$ such that
$\riS(L_1)=\riS'(L_1)$ and $\riS(L_2)=\riS'(L_2)$. Choose any $z\in L_{n}$
where $n\in\{1,2\}$. Then $\lambda_z$ and $\lambda_z'$ are both in
$\Kernel\bigl(H,\riS(L_{n})\bigr)$. According to Remark~\ref{rem:kernel},
$\lambda_z(1)=z=\lambda_z'(1)$ implies $\lambda_z=\lambda_z'$. Hence
\begin{equation}\label{eq:product}
    z\cdot x = \lambda_z(x)=\lambda_z'(x) = z \cdot' x
    \mbox{~~for all~~} x\in H \mbox{~~and all~~} z\in L_1\cup L_2 .
\end{equation}
\par

More generally, the equality in \eqref{eq:product} is fulfilled for all $x\in
H$ and all $z$ from the subfield of $(H,+,\cdot)$ that is generated by $L_1\cup
L_2$. This subfield coincides with $(H,+,\cdot)$, since $L_1$ is a maximal
subfield of $(H,+,\cdot)$. All things considered, we obtain
$(H,+,\cdot)=(H,+,\cdot')$ and therefore ${\parallel} = {\rip} = {\ripp} =
{\parallel'}$.
\end{proof}

\begin{rem}
Note that the above theorem may alternatively be established by using the
one-to-one correspondence between Clifford parallelisms and external planes to
the Klein quadric (see \cite[Cor.~4.5]{havl-16a}).
\end{rem}

%%%%%%%%%%%%%%%%%%%%%%%%%%%%%%%%%%%%%%%%%%%%%%%%%%%%%%%%%%%%%%%%%%%%%%%%%%%%
\section{Automorphisms, their orbits and actions}\label{se:auto+orbits}
%%%%%%%%%%%%%%%%%%%%%%%%%%%%%%%%%%%%%%%%%%%%%%%%%%%%%%%%%%%%%%%%%%%%%%%%%%%%

This section is devoted to deepen the study of the automorphisms of the
Clifford parallelisms of a three-dimensional projective double space {$\dspH$}
as described in Section~\ref{se:clifford}. In particular we obtain a
description of the orbits of certain points and lines under the action of the
group $\inner{H^*}$, and we characterise the right parallel classes fixed (as a
set) by a given left translation. In order to avoid trivialities, we shall
repeatedly confine ourselves to case \eqref{A}. These findings will lead us in
Section~\ref{se:main} to the proof of our main results.

%%%%%%%%%%%%%%%%%%%%%%%%%%%%%%%%%%%%%%%%%%%%%%%%%%%%%%%%%%%%%%%%%%%%%%%%%%%%
\subsection{Automorphisms}\label{subse:auto}
%%%%%%%%%%%%%%%%%%%%%%%%%%%%%%%%%%%%%%%%%%%%%%%%%%%%%%%%%%%%%%%%%%%%%%%%%%%%

In this subsection $H$ always denotes an $F$-algebra subject to \eqref{A}
or \eqref{B}.
Given any parallelism $\parallel$ on $\bPH$, we are going to use from now on
the phrase \emph{automorphism of $\parallel$} for any $\beta$ in the general
semilinear group $\GammaL(H_F)$ that acts as a $\parallel$-preserving
collineation on $\bPH$. The symbol $\Gamma_\parallel$ denotes the
\emph{automorphism group} of $\parallel$. This terminology is in accordance
with the one in \cite{havl+p+p-20a}.h

\par
The Clifford parallelisms of the projective double space $\dspH$ give rise to
automorphism groups $\Gamma_{\lep}=:\Autle$ and $\Gamma_{\rip}=:\Autri$. These
groups coincide, that is,
\begin{equation}\label{eq:le=ri}
    \Autle=\Autri .
\end{equation}
In case \eqref{A}, a proof can be derived from \cite[p.~166]{pian-87b}; see
\cite[Sect.~2]{havl+p+p-20a} for further details. In case \eqref{B}, equation
\eqref{eq:le=ri} is trivial. The group $\lambda(H^*)$ of left translations, the
group $\rho(H^*)$ of right translations and the group $\inner{H^*}$ of inner
automorphisms are subgroups of $\Autle=\Autri$.

\begin{lem}\label{lem:kernel}
Let $\riS(M)$ be the right parallel class of a line $M\in\cL(H_F)$. The
elements of the kernel\/ $\Kernel\bigl(H,\riS(M)\bigr)$ are precisely the
mappings $\lambda_g$ with $g$ ranging in the line that contains the point $F1$
and is right parallel to $M$. Consequently,
\begin{equation}\label{eq:lambda(H)}
    \lambda(H)  = \bigcup_{L\,\in\,\cAH} \Kernel\bigl(H,\riS(L)\bigr)
                = \bigcup_{M\,\in\,\cLH} \Kernel\bigl(H,\riS(M)\bigr) .
\end{equation}
A similar result holds with the role of ``left'' and ``right'' interchanged.
\end{lem}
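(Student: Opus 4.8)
The plan is to fix the unique line $L_0\in\riS(M)$ through the point $F1$; since $F1\subseteq L_0$, this $L_0$ lies in $\cAH$ and is exactly the line through $F1$ that is right parallel to $M$. I will prove $\Kernel\bigl(H,\riS(M)\bigr)=\{\lambda_g:g\in L_0\}$ by establishing the two inclusions separately, and then read off \eqref{eq:lambda(H)} as a consequence.

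For the inclusion ``$\supseteq$'', recall that each $L\in\cAH$, and in particular $L_0$, is a subfield of $H$ containing $1$. Every line $N\in\riS(M)=\riS(L_0)$ is right parallel to $L_0$, hence of the form $N=\rho_c(L_0)=L_0c$ for some $c\in H^*$. For $g\in L_0$ I then compute $\lambda_g(N)=g(L_0c)=(gL_0)c$; since $L_0$ is closed under multiplication, $gL_0\subseteq L_0$, so $\lambda_g(N)\subseteq L_0c=N$. As $\lambda_g$ is additive, this shows $\lambda_g\in\Kernel\bigl(H,\riS(M)\bigr)$.

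For the inclusion ``$\subseteq$'', let $\phi\in\Kernel\bigl(H,\riS(M)\bigr)$ and put $g:=\phi(1)$. Because $1\in L_0$ and $L_0\in\riS(M)$, we have $\phi(L_0)\subseteq L_0$, so $g\in L_0$. By the inclusion just proved, $\lambda_g\in\Kernel\bigl(H,\riS(M)\bigr)$ as well, and $\phi(1)=g=\lambda_g(1)$ with $1\in H^*$. The difference argument recorded in Remark~\ref{rem:kernel} then forces $\phi=\lambda_g$, which completes the characterisation.

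To obtain \eqref{eq:lambda(H)} I specialise to $M=L\in\cAH$: the line through $F1$ right parallel to $L$ is $L$ itself, so $\Kernel\bigl(H,\riS(L)\bigr)=\{\lambda_g:g\in L\}$. Taking the union over all $L\in\cAH$ and using that the lines through $F1$ cover $H$ gives $\bigcup_{L\in\cAH}\Kernel\bigl(H,\riS(L)\bigr)=\lambda(H)$. The outermost expression coincides with this because the kernel of a right parallel class depends only on that class, and every right parallel class contains exactly one line of $\cAH$ (its member through $F1$); hence letting $M$ range over all of $\cLH$ yields the same family of kernels as letting $L$ range over $\cAH$. The left/right symmetric statement follows by passing to the opposite algebra (cf.\ Remark~\ref{rem:algebras}). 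I expect the only genuine point of care to be the forward inclusion: it hinges on writing each member of $\riS(M)$ as $L_0c$ (right multiplication) and on the multiplicative closure of the subfield $L_0$, so that $\lambda_g(L_0c)=(gL_0)c\subseteq L_0c$ via associativity. Correctly pairing the left translation $\lambda_g$ with the right-parallel structure is what makes this collapse work; the remaining inclusion and the union formula are then routine bookkeeping.
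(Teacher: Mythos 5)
Your proof is correct and follows essentially the same route as the paper: the paper writes the line through $F1$ right parallel to $M$ as $Md$ (with $F1\subseteq\rho_d(M)$) where you call it $L_0$, verifies $\lambda_g(Mdh)=(gMd)h\subseteq Mdh$ exactly as in your forward inclusion, and settles the converse via $\phi(1)\in Md$ and Remark~\ref{rem:kernel}, just as you do. The derivation of \eqref{eq:lambda(H)} from the covering of $H$ by the lines of $\cAH$ and the fact that each right parallel class meets $\cAH$ is likewise the paper's argument.
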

\begin{proof}
There is a $d\in H^*$ such that $F1\subseteq \rho_d(M)=Md$. Choose any $g\in
Md$. Then, for all $h\in H^*$, $\lambda_g(Mdh)=g(Mdh)=(gMd)h\subseteq Mdh$,
whence $\lambda_g\in\Kernel\bigl(H,\riS(M)\bigr)$. Conversely, let
$\phi\in\Kernel\bigl(H,\riS(M)\bigr)$. Then $\phi(1)\in Md$ gives
$\lambda_{\phi(1)}\in \Kernel\bigl(H,\riS(M)\bigr)$, and
$\phi(1)=\lambda_{\phi(1)}(1)$ implies $\phi=\lambda_{\phi(1)}$ according to
Remark~\ref{rem:kernel}.
\par
Equation \eqref{eq:lambda(H)} is now immediate, since each element of $H$ is
contained in at least one line of the star $\cAH$ and each right parallel class
contains a line passing through $F1$.
\end{proof}
Any line $L\in\cAH$ is a commutative quadratic extension field of $F$ contained
in $H$. The above Lemma illustrates the rather obvious result that the
restriction to $L$ of the representation $\lambda$ (respectively $\rho$)
provides an isomorphism of the field $L$ onto the kernel of the right
(respectively left) parallel class of the line $L$. This proves anew that all
left and right parallel classes are regular spreads (see
\cite[4.8~Cor.]{blunck+p+p-10a}, \cite[Prop.~3.5]{havl-15} or
\cite[Prop.~4.3]{havl-16a}). Maybe less obvious is the following conclusion.
Any semilinear transformation $\phi\in\GammaL(H_F)$ that fixes all lines of one
right (respectively left) parallel class is a left (respectively right)
translation and therefore in the automorphism group $\Autle=\Autri$.
\par
In the next proposition we describe the automorphism group $\Autle = \Autri$.
Alternative proofs, which cover only the case when $H$ is a quaternion skew
field, can be retrieved from \cite[Sect.~4]{blunck+k+s+s-18a},
\cite[Thm.~1]{pian-87b} and \cite[Prop.~4.1 and 4.2]{pz90-coll}. Below, we
follow the exposition in \cite[Sect.~2]{havl+p+p-20a}.

\begin{prop}\label{prop:auto}
Let\/ $\dspH$ be a projective double space, where $H$ is an $F$-algebra subject
to\/ \eqref{A} or\/ \eqref{B}. The automorphism group of the right parallelism
satisfies
\begin{equation}\label{eq:auto}
    \Autri = \lambda(H^*)\rtimes \Aut(H/F),
\end{equation}
where\/ $\Aut(H/F)$ denotes the group of all automorphisms of the field $H$
that fix $F$ as a set.
\end{prop}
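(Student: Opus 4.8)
The plan is to realise $\Autri$ as an internal semidirect product, with $\lambda(H^*)$ as the normal factor and $\Aut(H/F)$ as a complement. First I would record the two inclusions $\lambda(H^*)\subseteq\Autri$ and $\Aut(H/F)\subseteq\Autri$. The former holds because each $\lambda_g$ commutes with every right translation, so it carries right parallel pairs to right parallel pairs; the latter because an automorphism $\sigma$ of $H$ fixing $F$ as a set is $F$-semilinear (its restriction to $F$ being the companion automorphism) and satisfies $\sigma(Mc)=\sigma(M)\sigma(c)$. Since a left translation fixes $1$ exactly when it is the identity, while every field automorphism fixes $1$, the intersection $\lambda(H^*)\cap\Aut(H/F)$ is trivial, which will supply one of the semidirect-product conditions.

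Next I would factor an arbitrary $\beta\in\Autri$, using the point $F1$ as an anchor and composing with left translations to normalise $\beta$ there. Writing $\beta(F1)=Fh$, the map $\lambda_{h^{-1}}\circ\beta$ fixes $F1$ and hence sends $1$ to a scalar $c\in F^*$, so $\delta:=\lambda_{c^{-1}h^{-1}}\circ\beta$ lies in $\Autri$ and satisfies $\delta(1)=1$. The crux of the argument---and the step I expect to be the main obstacle---is to show that this residual $\delta$ is a field automorphism, i.e.\ $\delta\in\Aut(H/F)$.

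To accomplish this I would feed the right-parallelism structure back into $\delta$ by way of Lemma~\ref{lem:kernel}. For any $L\in\cAH$ one has $L\rip L$, so the line through $F1$ that is right parallel to $L$ is $L$ itself; thus $\Kernel\bigl(H,\riS(L)\bigr)=\lambda(L)$. Because $\delta$ preserves $\rip$ it permutes the right parallel classes, and a direct check shows that it conjugates kernels to kernels: $\delta\,\Kernel\bigl(H,\riS(L)\bigr)\,\delta^{-1}=\Kernel\bigl(H,\riS(\delta(L))\bigr)=\lambda(\delta(L))$. Evaluating the corresponding left translations at $1$ and using $\delta(1)=1$ identifies the images of generators, yielding $\delta\circ\lambda_g\circ\delta^{-1}=\lambda_{\delta(g)}$ for every $g\in L$, and hence for every $g\in H$ since each element lies in some $L\in\cAH$. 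Reading this identity off on an arbitrary argument unwinds precisely to multiplicativity, $\delta(gy)=\delta(g)\,\delta(y)$; as $\delta$ is additive, bijective and maps $F$ onto $F$, it is indeed a field automorphism fixing $F$.

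Finally I would assemble the three semidirect-product ingredients. The factorisation $\beta=\lambda_{hc}\circ\delta$ gives $\Autri=\lambda(H^*)\,\Aut(H/F)$; the conjugation formula $\delta\circ\lambda_g\circ\delta^{-1}=\lambda_{\delta(g)}$ shows that $\Aut(H/F)$ normalises $\lambda(H^*)$, and since $\lambda(H^*)$ plainly normalises itself, every element of the product $\Autri=\lambda(H^*)\,\Aut(H/F)$ normalises $\lambda(H^*)$, so $\lambda(H^*)\trianglelefteq\Autri$. Combined with the trivial intersection obtained above, this establishes \eqref{eq:auto}.
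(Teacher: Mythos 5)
Your proposal is correct and follows essentially the same route as the paper: normalise $\beta$ at the point $F1$ by a left translation, use the identification $\Kernel\bigl(H,\riS(L)\bigr)=\lambda(L)$ from Lemma~\ref{lem:kernel} together with conjugation of kernels and evaluation at $1$ to get $\delta\circ\lambda_g\circ\delta^{-1}=\lambda_{\delta(g)}$, and unwind this to multiplicativity of $\delta$. The only cosmetic difference is that the paper deduces normality of $\lambda(H^*)$ up front from the union formula \eqref{eq:lambda(H)} applied to an arbitrary $\gamma\in\Autri$, whereas you recover it at the end from the product decomposition; both are sound.
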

\begin{proof}
A direct verification shows that the group $\Aut(H/F)$ is a
subgroup of $\Autri$. As we noted at the beginning of this subsection,
the same applies for the group $\lambda(H^*)$. For all $\gamma\in\Autri$ and
all lines $M\in\cLH$, we have
\begin{equation}\label{eq:kernel-image}
    \Kernel\bigl(H,\riS(\gamma(M))\bigr)
    =\gamma\circ \Kernel\bigl(H,\riS(M)\bigr)\circ\gamma^{-1} .
\end{equation}
Using \eqref{eq:lambda(H)}, this implies that $\lambda(H^*)$ is a normal
subgroup of $\Autri$.
\par
Let us choose any $\beta\in\Autri$. We define
$\phi:=\lambda_{\beta(1)}^{-1}\circ\beta$, whence $\phi\in\Autri$ fixes $1\in
H$. In order to verify
\begin{equation}\label{eq:lambda-image}
    \phi\circ\lambda_z\circ\phi^{-1} = \lambda_{\phi(z)}
    \mbox{~~for all~~} z\in H,
\end{equation}
we proceed as follows. There is a line $L$ with $1,z\in L$. Applying
\eqref{eq:kernel-image} to $\gamma:=\phi$ and $M:=L$ gives that
$\phi\circ\lambda_z\circ\phi^{-1}$ as well as $\lambda_{\phi(z)}$ belongs to
$\Kernel\bigl(H,\riS(\phi(L))\bigr)$. Now
$(\phi\circ\lambda_z\circ\phi^{-1})(1)=\lambda_{\phi(z)}(1)$ together with
Remark~\ref{rem:kernel} establishes \eqref{eq:lambda-image}. For all $x,y\in
H$, we have
\begin{equation*}
\begin{aligned}
    \phi(xy)    &=  (\phi\circ\lambda_x\circ\lambda_y)(1)\\
                &=  \bigl((\phi\circ\lambda_x\circ\phi^{-1}) \circ
                        (\phi\circ\lambda_y\circ\phi^{-1})\bigr)(1)\\
                &=  (\lambda_{\phi(x)}\circ\lambda_{\phi(y)})(1)\\
                &=  \phi(x)\phi(y)\\
\end{aligned}
\end{equation*}
so that $\phi$ is an automorphism of the field $H$. Furthermore, $\phi(1)=1$
together with $\phi$ being $F$-semilinear implies $\phi(F)=F$.
\end{proof}
Take notice that $F$ is the centre of the quaternion skew field $H$ in case
\eqref{A} and so under these circumstances $\Aut(H/F)=\Aut(H)$.
\par
Suppose that $\phi\in \Aut(H/F)$ is $F$-linear or, equivalently, that $\phi$
fixes $F$ elementwise. Then $\phi\in\inner{H^*}$ is an inner automorphism of
the field $H$. In case \eqref{A}, this follows from the theorem of
Skolem-Noether \cite[Thm.~4.9]{jac-89}. In case \eqref{B}, any inner
automorphism of $H$ is trivial and $\phi=\id_H$, since any $h\in H^*\setminus
F^*$ is a double zero of the polynomial $h^2+t^2\in F[t]$, which is the minimal
polynomial of $h$ over $F$. So, by \eqref{eq:auto}, the group of all
\emph{$F$-linear automorphisms} of $\rip$ can be written in the form
\begin{equation}\label{eq:semidir2}
    \Autri\cap \GL(H_F) = \lambda(H^*) \rtimes \inner{H^*} .
\end{equation}
\par
Let $\parallel$ be any Clifford-like parallelism of $\dspH$. The group
appearing in \eqref{eq:semidir2} coincides with the group $\Autp\cap\GL(H_F)$
comprising all \emph{$F$-linear automorphisms} of $\parallel$ (see
\cite[Thm.~3.5]{havl+p+p-20a}). The problem to determine the full automorphism
group $\Autp$ without extra assumptions on $H$, $F$ or $\parallel$ seems to be
open. Partial solutions can be found \cite[Sect.~3]{havl+p+p-20a}. The examples
in \cite[Sect.~4]{havl+p+p-20a} show the existence of proper Clifford-like
parallelisms $\parallel$ satisfying $\Autp=\Autle=\Autri$ and also of proper
Clifford-like parallelisms $\parallel$ satisfying $\Autp\subset\Autle=\Autri$.

%%%%%%%%%%%%%%%%%%%%%%%%%%%%%%%%%%%%%%%%%%%%%%%%%%%%%%%%%%%%%%%%%%%%%%%%%%%%
\subsection{Orbits under the group of inner automorphisms}
%%%%%%%%%%%%%%%%%%%%%%%%%%%%%%%%%%%%%%%%%%%%%%%%%%%%%%%%%%%%%%%%%%%%%%%%%%%%

In this subsection $H$ denotes an $F$-algebra subject to \eqref{A}, that is, a
quaternion skew field with centre $F$. The following outcomes fail in case
\eqref{B}, since there the group of inner automorphisms is trivial.
\par
Recall that, given any $h\in H$, the \emph{trace} and the \emph{norm} of $h$
are the elements of $F$ defined, respectively, by $\tr(h)=h+\overline{h}$ and
$N(h)=h\overline{h}=\overline{h}h$, where $\overline{h}$ denotes the
\emph{conjugate} of $h$. The conjugation is an antiautomorphism of $H$ of order
$2$ that fixes $F$ elementwise. The identity $h^2-\tr(h)h+N(h)=0$ holds for any
$h\in H$. The norm $N$ is a multiplicative quadratic form and its associated
symmetric bilinear form is
\begin{equation}\label{eq:polar}
    \lire\colon H\x H\to F\colon
    (x,y)\mapsto \li x,y\re=\tr(x\ol{y})=x\ol{y}+y\ol{x} .
\end{equation}
The form $\lire$ is non-degenerate and so the mapping sending each subspace $X$
of $H_F$ to its orthogonal subspace $X^\perp$ is a polarity of $\bPH$.
\par
The next result is briefly mentioned in \cite[Rem.~4.5]{blunck+p+p-10a} and
\cite[p.~76, Ex.~10]{lam2} ($\Char F\neq 2$ only). For the sake of
completeness, a proof will be presented below.

\begin{lem}\label{lem:conjugates}
Given quaternions $q_1,q_2\in H$ there exists an inner automorphism of $H$
taking $q_1$ to $q_2$ if, and only if, $\tr(q_1)=\tr(q_2)$ and $N(q_1)=N(q_2)$.
\end{lem}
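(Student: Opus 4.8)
The plan is to prove the two implications separately, the forward one being routine and the converse carrying the real content. For the ``only if'' part I would argue that any inner automorphism $\inner{h}$, being an $F$-algebra automorphism of $H$ that fixes $F$ (setwise, in fact elementwise), preserves minimal polynomials over $F$. Since $q_i^2-\tr(q_i)q_i+N(q_i)=0$ shows that $X^2-\tr(q_i)X+N(q_i)$ annihilates $q_i$ and is its minimal polynomial over $F$ whenever $q_i\notin F$, the image $\inner{h}(q_i)$ (which again lies outside $F$) satisfies the same quadratic; comparing this with the minimal polynomial of $\inner{h}(q_i)$ forces equality of the linear and constant coefficients, i.e.\ of trace and norm. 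The case $q_i\in F$ is trivial because then $\inner{h}(q_i)=q_i$.

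For the converse I would first dispose of the degenerate case. If $q_1\in F$, then $\tr(q_1)=\tr(q_2)$ and $N(q_1)=N(q_2)$ give $(q_2-q_1)^2=0$, hence $q_2=q_1$ because $H$ has no zero divisors, and the identity automorphism does the job. So I may assume $q_1\notin F$ and set $t:=\tr(q_1)=\tr(q_2)$ and $n:=N(q_1)=N(q_2)$. The heart of the matter is then to produce some $h\in H^*$ with $q_1h=hq_2$, for then $\inner{h}(q_1)=h^{-1}q_1h=q_2$.

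The idea is to exhibit the required $h$ inside the image of a suitable $F$-linear operator. Working in $\End(H_F)$ I would introduce $\psi:=\lambda_{q_1}-\rho_{q_2}$, whose kernel consists exactly of the desired solutions, together with the auxiliary operator $\chi:=\lambda_{q_1}-\rho_{\ol{q_2}}$, where $\ol{q_2}=t-q_2$. Using that $\lambda$ is a homomorphism, $\rho$ an antihomomorphism, that $\lambda_{q_1}$ and $\rho_{q_2}$ commute, and the relations $q_1^2=tq_1-n$, $q_2+\ol{q_2}=t$, $q_2\ol{q_2}=n$, a short computation gives $\psi\chi=\lambda_{q_1^2}-t\lambda_{q_1}+n\,\id=0$, so that $\operatorname{im}\chi\subseteq\ker\psi$. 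Finally I would check that $\chi\neq0$: otherwise evaluation at $1$ would give $q_1=\ol{q_2}$ and then $\chi(h)=q_1h-hq_1$ for all $h$, forcing $q_1$ to be central, contrary to $q_1\notin F$. Hence $\ker\psi\supseteq\operatorname{im}\chi\neq\{0\}$, any nonzero $h\in\ker\psi$ is invertible in the skew field $H$, and we are done.

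The step I expect to require the most care is verifying $\psi\chi=0$ and $\chi\neq0$ uniformly in all characteristics, in particular making sure the argument never secretly divides by $2$; this is precisely why I route everything through $\ol{q_2}=t-q_2$ rather than through a ``pure-quaternion'' orthogonality argument, which would be adapted only to $\Char F\neq2$. As an alternative to this elementary computation one could invoke the Skolem--Noether theorem to extend the $F$-isomorphism $F(q_1)\to F(q_2)$, $q_1\mapsto q_2$, to an inner automorphism of $H$; but the operator argument above keeps the exposition self-contained.
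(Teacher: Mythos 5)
Your proof is correct, and its converse direction takes a genuinely different route from the paper's. The paper argues structurally: since $q_1$ and $q_2$ share the monic quadratic $m(t)=t^2-\tr(q_1)t+N(q_1)$, either $m$ is reducible (forcing $q_1=q_2\in F$) or $\id_F$ extends to an isomorphism $F1\oplus Fq_1\to F1\oplus Fq_2$ with $q_1\mapsto q_2$, which Skolem--Noether then promotes to an inner automorphism of $H$ --- exactly the alternative you mention in your closing sentence. You instead produce the conjugating element by hand: the identity $\psi\circ\chi=\lambda_{q_1^2}-t\lambda_{q_1}+n\,\id=0$ for $\psi=\lambda_{q_1}-\rho_{q_2}$ and $\chi=\lambda_{q_1}-\rho_{\ol{q_2}}$ shows $\chi(H)\subseteq\ker\psi$, and $\chi\neq 0$ (else $q_1=\ol{q_2}$ would be central) yields a nonzero, hence invertible, $h$ with $q_1h=hq_2$. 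This amounts to a self-contained proof of the special case of Skolem--Noether actually needed, valid in all characteristics, at the cost of a computation the paper outsources to two citations; the paper's version is shorter and makes the underlying structural reason (uniqueness of the extension of $\id_F$ to the quadratic subfields) more visible. Your treatment of the degenerate case $q_1\in F$ via $(q_2-q_1)^2=0$ and of the forward direction via preservation of minimal polynomials matches the paper in substance (the paper merely calls the forward direction straightforward).
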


\begin{proof}
From $\tr(q_1)=\tr(q_2)$ and $N(q_1)=N(q_2)$, the quaternions $q_1,q_2$ are
zeros of the polynomial $m(t)=t^2-\tr(q_1)t+N(q_1)\in F[t]$. If $m(t)$ is
reducible over $F$, then $m(t)$ has no zeros in $H$ outside $F$. Thus $q_1\in
F$ and $m(t)=(t-q_1)^2$. Now $m(q_2)=0$ yields $q_2=q_1$, whence the identity
$\id_H$ is a solution. On the other hand, if $m(t)$ is irreducible over $F$,
then $\id_F$ can be extended in a unique way to an isomorphism $\gamma$ of the
commutative field $F1\oplus Fq_1\subset H$ onto the commutative field $F1\oplus
Fq_2\subset H$ such that $\gamma(q_1)=q_2$; see, for example,
\cite[Prop.~7.2.2]{cohn-03a}. By the theorem of Skolem-Noether
\cite[Thm.~4.9]{jac-89}, this $\gamma$ extends to an inner automorphism of $H$.
\par
The proof of the converse is straightforward.
\end{proof}

The above result describes the orbits under the action of the inner
automorphism group $\inner{H^*}$ on quaternions.\footnote{After extending $H$
to a \emph{projective line over $H$} by adding an extra point $\infty$, these
$\inner{H^*}$-orbits turn into orbits of the group of projectivities that fix
the points $0$, $1$ and $\infty$. This approach results in an alternative
description, as can be seen from \cite{havl-88b}.} By considering the vector
space $H_F$ as an affine space, the orbit of any $q\in H$ is the intersection
of the affine quadric $\{x\in H\mid N(x)=N(q)\}$ with the hyperplane $\{x\in
H\mid \tr(x)=\tr(q)\}$. Here, however, we aim at providing a description of the
orbits of the points of $\bPH$ under the action of $\inner{H^*}$. Since the
behaviour of the points of the plane $(F1)^\perp=\{x\in H\mid \tr(x)=0\}$ is
different from that of any other point, these points will be excluded in the
next proposition.

\begin{prop}\label{prop:orbit}
Let $H$ be a quaternion skew field with centre $F$ and let $Fq$, $q\in H^*$, be
a point of\/ $\bPH$ such that\/ $\tr(q)\neq 0$. Then the following hold.
\begin{enumerate}
\item\label{prop:orbit.a} The orbit of $Fq$ under the action of the group
    $\inner{H^*}$ of inner automorphisms of $H$ is a quadric of\/ $\bPH$,
    say $\cO_q$, which is given by the quadratic form
\begin{equation*}
    \omega_q\colon H\to F\colon x\mapsto \tr(q)^2\,N(x)- N(q)\,\tr(x)^2 .
\end{equation*}

\item\label{prop:orbit.b} If $q\in F^*$, then $\cO_q$ consists of a single
    point.

\item\label{prop:orbit.c} If $q\in H^*\setminus F^*$, then $\cO_q$ is an
    elliptic quadric, no line through $F1$ is tangent to $\cO_q$, and the
    polar form of $\omega_q$ is non-degenerate.
\end{enumerate}
\end{prop}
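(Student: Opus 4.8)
The plan is to derive all three statements from the conjugacy criterion of Lemma~\ref{lem:conjugates}, together with the basic fact that the norm form $N$ is \emph{anisotropic}: a nonzero $x\in H$ with $N(x)=0$ would be a non-invertible nonzero element of the skew field $H$, which is impossible. For statement~(a) I would identify $\cO_q$ with the zero set of $\omega_q$. Since an inner automorphism preserves trace and norm, every point of the orbit has a representative of trace $\tr(q)$ and norm $N(q)$, and $\omega_q(q)=\tr(q)^2N(q)-N(q)\tr(q)^2=0$ shows that $Fq$, hence the whole orbit, lies on $\{\omega_q=0\}$. Conversely, given $Fx$ with $x\neq 0$ and $\omega_q(x)=0$, anisotropy of $N$ forces $\tr(x)\neq 0$, for otherwise $\omega_q(x)=\tr(q)^2N(x)$ would give $N(x)=0$, i.e.\ $x=0$. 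Then the scalar $\lambda:=\tr(q)/\tr(x)$ satisfies $\tr(\lambda x)=\tr(q)$ and, using $\omega_q(x)=0$, also $N(\lambda x)=\lambda^2N(x)=N(q)$, so by Lemma~\ref{lem:conjugates} some inner automorphism sends $q$ to $\lambda x$ and thus $Fx\in\cO_q$. Hence $\cO_q=\{Fx\mid\omega_q(x)=0\}$ is precisely the quadric of $\omega_q$.

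For statement~(b) the quickest route is to note that a central $q\in F^*$ satisfies $\inner{h}(q)=q$ for all $h\in H^*$, so $\cO_q=\{Fq\}=\{F1\}$ is a single point. This is consistent with (a): here $\omega_q(x)=q^2\bigl(4N(x)-\tr(x)^2\bigr)=4q^2\,N\bigl(x-\tfrac12\tr(x)1\bigr)$, which by anisotropy of $N$ vanishes exactly when the pure part $x-\tfrac12\tr(x)1$ is zero, that is, when $x\in F1$.

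For statement~(c) there are three claims. First, the polar form $b_q$ of $\omega_q$: writing $\tr(y)=\li 1,y\re$ and using \eqref{eq:polar}, a direct computation gives $b_q(x,y)=\li \tr(q)^2x-2N(q)\tr(x)1,\,y\re$, so by non-degeneracy of $\lire$ the radical consists of those $x$ with $\tr(q)^2x=2N(q)\tr(x)1$; such an $x$ lies in $F1$, and then $\tr(q)^2\neq 4N(q)$ (valid because $q\notin F1$ and $N$ is anisotropic, or trivially since $\tr(q)\neq 0$ in characteristic two) forces $x=0$, so $b_q$ is non-degenerate. Second, ellipticity: $\omega_q(q)=0$ shows $\cO_q$ is non-empty, so the Witt index of $\omega_q$ is at least $1$; it cannot equal $2$, because there is no totally isotropic plane. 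Indeed, if $Fx$ and $Fy$ spanned such a plane I would rescale so that $\tr(x)=\tr(y)=\tr(q)$; isotropy then yields $N(x)=N(y)=N(q)$ and vanishing of the polar form yields $\li x,y\re=2N(q)$, whence $N(x-y)=N(x)-\li x,y\re+N(y)=0$ and anisotropy forces $x=y$, contradicting linear independence. A non-degenerate quaternary form of Witt index $1$ defines an elliptic quadric. Third, the tangent claim: the polarity induced by $b_q$ shows that a line through $F1$ is tangent to $\cO_q$ exactly when it touches $\cO_q$ at a point of $\cO_q\cap(F1)^{\perp}$; but $b_q(1,x)=(\tr(q)^2-4N(q))\tr(x)$ gives $(F1)^{\perp}=\{x\mid\tr(x)=0\}$, on which $\omega_q$ restricts to $\tr(q)^2N$, an anisotropic form. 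Hence $\cO_q\cap(F1)^{\perp}=\emptyset$ and no line through $F1$ is tangent.

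The main obstacle I anticipate is ruling out the hyperbolic case, i.e.\ showing that $\cO_q$ contains no line. The device of collapsing a hypothetical totally isotropic plane to the single equation $N(x-y)=0$ and invoking anisotropy of $N$ is the crux here, and it has the merit of working uniformly in every characteristic. Characteristic two needs a little extra care elsewhere---there $1\in(F1)^{\perp}$, the polar form is alternating, and ``tangent'' must be read as meeting $\cO_q$ in exactly one rational point---so I would double-check the tangency step directly from the binary form obtained by restricting $\omega_q$ to the plane spanned by $1$ and a direction vector, confirming that this form has $0$ or $2$ rational zeros but never a single one.
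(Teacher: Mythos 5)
Your proof is correct. Parts \eqref{prop:orbit.a} and \eqref{prop:orbit.b} coincide with the paper's argument (preservation of trace and norm, anisotropy of $N$ forcing $\tr(x)\neq 0$ on the quadric, rescaling to apply Lemma~\ref{lem:conjugates}, and the orbit of a central element being a single point). In part \eqref{prop:orbit.c} you take a genuinely different route. You compute the polar form and its radical explicitly and kill the radical via $\tr(q)^2\neq 4N(q)$, which works uniformly in every characteristic, whereas the paper splits into $\Char F\neq 2$ (a singular point would lie on $\cO_q$) and $\Char F=2$ (the polar form is a non-zero scalar multiple of the alternating form $\lire$). For ellipticity you rule out a totally singular plane by collapsing it to $N(x-y)=0$; the paper instead observes that $\cO_q$ is disjoint from the plane $\{x\mid\tr(x)=0\}$, which every line of $\bPH$ must meet, so $\cO_q$ cannot be ruled --- a one-line geometric shortcut you might prefer. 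For the tangency claim you use the polarity of $\omega_q$, noting that a tangent line through $F1$ would have to touch $\cO_q$ inside $(F1)^\perp=\{x\mid\tr(x)=0\}$, which is empty; the paper instead exhibits the residual intersection point $F\ol{q}\neq Fq$ on the line joining $F1$ and $Fq$ and transports this to every point of $\cO_q$ by the transitive action of $\inner{H^*}$ (which fixes $F1$). Both arguments ultimately rest on the same two facts, anisotropy of $N$ and $\cO_q\cap\{\tr=0\}=\emptyset$; yours is more computational but characteristic-free throughout, while the paper's leans on the group action and is shorter. Your closing caveats about $\Char F=2$ are already absorbed by your radical computation and are not needed; note also that case \eqref{prop:orbit.b} cannot occur in characteristic two under the hypothesis $\tr(q)\neq 0$, so your use of $\tfrac12$ there is harmless.
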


\begin{proof}
\eqref{prop:orbit.a} If $Fp$ is in the $\inner{H^*}$-orbit of $Fq$, then there
are $c\in F^*$ and $h\in H^*$ such that $p= c h^{-1}qh$. Consequently,
$N(p)=c^2 N(q)$ and $\tr(p)=c\tr(q)$. This entails
$\omega_q(p)=c^2\omega_q(q)=0$.
\par
Conversely, let a point $Fp'$, $p'\in H^*$, be given with $\omega_q(p')=0$.
Then $p'\neq 0$ implies $N(p')\neq 0$ and so $\tr(p')\neq 0$ follows from
$\omega_q(p')=0$. We define
\begin{equation*}
    p:=\tr(q)\tr(p')^{-1}p' .
\end{equation*}
Then $\tr (p)=\tr (q)\neq 0$, and $\omega_q(p)=0$ establishes $N(p)=N(q)$. Now
Lemma~\ref{lem:conjugates} implies the existence of an $h\in H^*$ such that
$p=h^{-1}qh$.
\par
\eqref{prop:orbit.b} The quadric $\cO_q$, $q\in F^*$, is the
$\inner{H^*}$-orbit of $F1$, whence it consists of this single point only.
\par
\eqref{prop:orbit.c} The point $F1$ is not in the $\inner{H^*}$-orbit of $Fq$
and so $F1$ is off the quadric $\cO_q$. From $q+\ol{q} = \tr(q)\in F^*$ and
$\omega_q(\ol{q})=0$, the line joining $Fq$ and $F1$ meets $\cO_q$ residually
at $F\ol{q}\neq Fq$ and so it is not tangent to $\cO_q$. Also, the point $Fq$
is a regular point of $\cO_q$. By the transitive action of the group
$\inner{H^*}$ on the points of $\cO_q$, the same applies to all other points of
$\cO_q$. The quadric $\cO_q$ cannot be ruled, because it does not contain any
point of the plane $\{x\in H\mid\tr(x)=0\}$.
\par
The polar form of $\omega_q$ is
\begin{equation*}
    (x,y)\mapsto \tr(q)^2\li x,y\re - 2 N(q)\tr(x)\tr(y)
            = \tr(q)^2\tr(x\ol{y}) -2N(q)\tr(x)\tr(y) .
\end{equation*}
If Char $F\neq 2$ then the polar form of $\omega_q$ is non-degenerate, since
otherwise $\cO_q$ would contain a singular point. In the case of $\Char F=2$
the form $\omega_q$ is non-degenerate, because it merely is a non-zero scalar
multiple of the non-degenerate alternating bilinear form $\lire$ from
\eqref{eq:polar}.
\end{proof}

\begin{prop}\label{prop:lineorbit}
Let $H$ be a quaternion skew field with centre $F$ and, in\/ $\bPH$, let $L$ be
a line that passes through the point $F1$ and is not contained in the plane\/
$(F1)^\perp$. Every plane through an arbitrary line in the $\inner{H^*}$-orbit
of $L$ contains infinitely many lines of this orbit.
\end{prop}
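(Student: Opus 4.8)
The plan is to turn this into a statement about a single quadric. By Proposition~\ref{prop:orbit} the point $Fq$ spanning $L$ together with $F1$ determines an elliptic quadric $\cO_q$, and I shall show that the $\inner{H^*}$-orbit of $L$ is precisely the set of secant lines of $\cO_q$ that pass through $F1$. Once this is known, counting orbit lines inside a plane $\pi$ amounts to counting secants through $F1$ lying in $\pi$, which I handle by intersecting $\pi$ with $\cO_q$.

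First I fix a spanning vector of $L$. As $L$ passes through $F1$ but is not contained in $(F1)^\perp=\{x\in H\mid\tr(x)=0\}$, the trace does not vanish identically on $L$, so I may write $L=F1\oplus Fq$ with $q\in H^*\setminus F^*$ and $\tr(q)\neq0$. Then $Fq$ meets the hypotheses of Proposition~\ref{prop:orbit}, and part~\eqref{prop:orbit.c} yields that $\cO_q$ is an elliptic quadric, that $F1$ lies off $\cO_q$, and that \emph{no line through $F1$ is tangent to $\cO_q$}. Since every inner automorphism fixes $F1$, each orbit line $\inner{h}(L)=F1\oplus F(h^{-1}qh)$ is the join of $F1$ with the point $F(h^{-1}qh)\in\cO_q$; conversely, by transitivity of $\inner{H^*}$ on $\cO_q$, every join of $F1$ with a point of $\cO_q$ is an orbit line. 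As none of these joins is tangent, each meets $\cO_q$ in two distinct points, so the orbit of $L$ is exactly the set of secants of $\cO_q$ through $F1$.

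Now let $L'$ be an orbit line and $\pi$ a plane with $L'\subseteq\pi$; then $F1\in\pi$. The section $C:=\pi\cap\cO_q$ is a conic of $\pi$. Because $L'$ is a secant lying in $\pi$, $C$ contains the two distinct points of $L'\cap\cO_q$, so $|C|\geq2$; and $C$ contains no line, since $\cO_q$ is not ruled. A plane conic containing no line is either non-degenerate or reduces to a single rational point, the latter being excluded by $|C|\geq2$. Hence $C$ is a non-degenerate conic carrying a rational point, and over the infinite field $F$ it therefore has infinitely many points.

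Finally, I project from $F1$. As $F1\notin C$, the assignment $P\mapsto F1\vee P$ maps $C$ into the pencil of lines of $\pi$ through $F1$, and it is at most two-to-one because a non-degenerate conic meets any line in at most two points; hence its image is infinite. Every line in this image meets $C\subseteq\cO_q$, so it meets $\cO_q$ and, not being tangent, is a secant through $F1$, that is, an orbit line contained in $\pi$. This exhibits infinitely many orbit lines in $\pi$. I expect the delicate point to be the identification of the orbit with the full set of secants through $F1$: it is the non-tangency clause of Proposition~\ref{prop:orbit}\,\eqref{prop:orbit.c} that guarantees a line of $\pi$ through $F1$ \emph{meeting} $\cO_q$ is automatically a secant, and hence genuinely an orbit line, so that the plane-section count produces orbit lines rather than mere incidences with the quadric.
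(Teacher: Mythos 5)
Your proof is correct and follows essentially the same route as the paper: pick $q\in L$ with $\tr(q)\neq 0$, invoke Proposition~\ref{prop:orbit} to get the elliptic quadric $\cO_q$, cut it with the plane to obtain an infinite non-degenerate conic, and join its points to $F1$ to produce infinitely many orbit lines. The only cosmetic differences are that the paper first reduces to planes through $L$ itself via the group action and argues ``the plane contains a bisecant, hence is not tangent, hence meets $\cO_q$ in a regular conic,'' whereas you work with an arbitrary orbit line directly after identifying the whole orbit with the set of secants of $\cO_q$ through $F1$ and you make the at-most-two-to-one projection explicit.
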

\begin{proof}
By virtue of the action of $\inner{H^*}$ on $\inner{H^*}(L)$, it is enough to
show the assertion for an arbitrary plane $E$ passing through $L$.
\par
On the line $L$, we can pick one point, say $Fq$, other than $F1$ such that
$\tr(q)\neq 0$. By Proposition~\ref{prop:orbit}, the orbit of $Fq$ is an
elliptic quadric $\cO_q$. Furthermore, the line $L$ is a bisecant of this
quadric that meets $\cO_q$ at $Fq$ and $F\ol{q}\neq Fq$. The plane $E$ contains
the bisecant $L$ of $\cO_q$ and so $E$ cannot be a tangent plane of $\cO_q$.
This implies that $E$ intersects $\cO_q$ along a regular conic. As $F$ is
infinite, so is this conic. By joining each of the points of the conic with
$F1$ we get infinitely many lines through $F1$ in the plane $E$. All of them
are in $\inner{H^*}(L)$.
\end{proof}

\begin{rem}\label{rem:infinite}
The orbit of any line $L\in\cAH$ under the group $\inner{H^*}$ is
infinite \cite[Thm.~3]{faith-58a}. This result was improved in \cite{waeh-81a}
by showing that any such orbit has cardinality $|F|$. Limited to the case of
quaternion skew fields and lines of $\cAH$ that are not in $(F1)^\perp$, the
last proposition enriches this result with a geometric insight.
\par
From \cite[Thm.~4.12]{havl+p+p-19a}, the group $\inner{H^*}$ acts transitively
on $\cAH$ if, and only if, $F$ is a formally real pythagorean field and $H$ is
an ``ordinary'' quaternion skew field with centre $F$. Precisely under these
circumstances, $\dspH$ admits no proper Clifford-like parallelisms.
\end{rem}

%%%%%%%%%%%%%%%%%%%%%%%%%%%%%%%%%%%%%%%%%%%%%%%%%%%%%%%%%%%%%%%%%%%%%%%%%%%%
\subsection{Parallel classes fixed by automorphisms}
%%%%%%%%%%%%%%%%%%%%%%%%%%%%%%%%%%%%%%%%%%%%%%%%%%%%%%%%%%%%%%%%%%%%%%%%%%%%

First, let $\dspH$ be a projective double space as specified in
Section~\ref{se:clifford}. Suppose that a left translation $\lambda_g$, $g\in
H^*$, acts as a non-identical collineation on $\bPH$. Hence $g\in H^*\setminus
F^*$. Any line $M\in\cLH$ is left parallel to its image $\lambda_g(M)$ and so
$\lambda_g$ fixes all left parallel classes. As we saw in
Lemma~\ref{lem:kernel}, $\riS(F1\oplus Fg)$ is the only right parallel class
that is fixed linewise under $\lambda_g$. If $\lambda_g$ fixes also all lines
of a left parallel class, then Lemma~\ref{lem:kernel} forces $\lambda_g$ to be
a right translation as well, that is, $g$ has to be in the centre of $H$. In
case \eqref{A} this gives a contradiction. In case \eqref{B}, $H$ is a
commutative field and so this condition imposes no restriction on $g$; due to
${\lep}={\rip}$, the given $\lambda_g$ fixes precisely one left parallel class
linewise, namely $\leS(F1\oplus Fg)$.
\par
For the rest of this subsection we confine ourselves to the case \eqref{A}.

\begin{prop}\label{prop:left-invariant}
Let $H$ be a quaternion skew field with centre $F$ and let $g\in H^*\setminus
F^*$. In\/ $\dspH$, a right parallel class is invariant under the left
translation $\lambda_g$ precisely when it is of the form $\riS(M)$, where $M$
is a line satisfying at least one of the following conditions:
\begin{gather}
    M=F1\oplus Fg ;                           \label{eq:M=1+g}  \\
    F1\subseteq M \subseteq g^{-1}(F1)^\perp .\label{eq:M-pencil}
\end{gather}
\end{prop}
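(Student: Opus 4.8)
The plan is to reduce the statement to a condition on the unique line of $\riS(M)$ through $F1$, and then to classify those lines that are normalised by $g$. Write $L\in\cAH$ for the line of $\riS(M)$ incident with $F1$, so that $\riS(M)=\riS(L)$. Since $\lambda_g\in\Autri$, one has $\lambda_g(\riS(L))=\riS(gL)$. By Lemma~\ref{lem:kernel} the kernel of $\riS(L)$ equals $\lambda(L)$, and applying \eqref{eq:kernel-image} with $\gamma=\lambda_g$, together with $\lambda_g\circ\lambda(L)\circ\lambda_g^{-1}=\lambda(gLg^{-1})$, shows that the kernel of $\riS(gL)$ equals $\lambda(gLg^{-1})$. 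Hence $\riS(gL)=\riS(L)$ holds exactly when $\lambda(gLg^{-1})=\lambda(L)$, i.e.\ when $gLg^{-1}=L$. Thus the proposition reduces to classifying the lines $L$ through $F1$ with $gLg^{-1}=L$; note that both \eqref{eq:M=1+g} and \eqref{eq:M-pencil} entail $F1\subseteq M$, so necessarily $M=L$.

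For sufficiency I would argue directly. If $L=F1\oplus Fg$ as in \eqref{eq:M=1+g}, then $1$ and $g$ are fixed by $x\mapsto gxg^{-1}$, so $gLg^{-1}=L$. For \eqref{eq:M-pencil} take $b\in L$: the inclusion $F1\subseteq g^{-1}(F1)^\perp$ gives $\tr(g)=0$, hence $\ol{g}=-g$, while $L\subseteq g^{-1}(F1)^\perp$ gives $\tr(gb)=0$. Expanding $\tr(gb)=gb+\ol{b}\,\ol{g}=gb-\ol{b}\,g$ yields $gb=\ol{b}\,g$, so $gbg^{-1}=\ol{b}=\tr(b)-b\in L$; as $b$ was arbitrary, $gLg^{-1}=L$.

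The necessity is the crux. The device that makes it characteristic-free is to regard conjugation by $g$ as acting on the quadratic field $L$: it is an inner automorphism of the field $H$ fixing $F$ elementwise, so if it preserves $L$ it restricts to an element of $\Aut(L/F)$. Since $[L:F]=2$, this restriction is either $\id_L$ or the unique nontrivial $F$-automorphism of $L$, which is the conjugation $\ol{(\cdot)}|_L$. In the first case every element of $L$ commutes with $g$, so $L$ lies in the centraliser of $g$, namely $F1\oplus Fg$; comparing dimensions gives $L=F1\oplus Fg$, i.e.\ \eqref{eq:M=1+g}. In the second case $gbg^{-1}=\ol{b}$, equivalently $gb=\ol{b}\,g$, for all $b\in L$; taking traces gives $\tr(gb)=\ol{b}\,\tr(g)$, which must lie in $F$ although $\ol{b}\notin F$ for $b\in L\setminus F$. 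This forces $\tr(g)=0$, and then $\tr(gb)=0$ for every $b\in L$, that is $L\subseteq g^{-1}(F1)^\perp$, i.e.\ \eqref{eq:M-pencil}.

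The step I expect to demand the most care is the reduction in the first paragraph, which rests on correctly combining Lemma~\ref{lem:kernel} with \eqref{eq:kernel-image} and on the injectivity of $\lambda$ as a map from lines to kernels. The second delicate point is uniformity in the characteristic: an eigenvalue analysis of the involution $x\mapsto gxg^{-1}$ is transparent only when $\Char F\neq 2$ and $\tr(g)=0$, whereas routing the argument through $\Aut(L/F)$—which has at most two elements in every characteristic—avoids all case distinctions. Finally, it is worth recording the geometric upshot: if $\tr(g)\neq 0$ only \eqref{eq:M=1+g} can hold, so $\riS(F1\oplus Fg)$ is the single invariant class, whereas if $\tr(g)=0$ condition \eqref{eq:M-pencil} supplies the entire pencil of lines through $F1$ in the plane $g^{-1}(F1)^\perp$.
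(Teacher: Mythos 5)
Your proof is correct, but it takes a genuinely different route from the paper's. The paper argues geometrically in both directions: for the sufficiency of \eqref{eq:M-pencil} it observes that $gM$ and $M^\perp$ are both left parallel to $M$ and both lie in the plane $(F1)^\perp$, hence coincide; for necessity it invokes \cite[Cor.~4.4]{havl+p+p-19a}, by which $\lambda_g(M)\lep M\rip\lambda_g(M)$ leaves only the alternatives $gM=M$ or $gM=M^\perp$, and these unwind into \eqref{eq:M=1+g} and \eqref{eq:M-pencil} respectively. You instead reduce the whole statement, via Lemma~\ref{lem:kernel} and \eqref{eq:kernel-image}, to the normaliser condition $gLg^{-1}=L$ on the unique line $L\in\cAH$ of the class, and settle that condition by field theory: the restriction of the inner automorphism to the quadratic extension $L/F$ is either the identity (so $L$ lies in the centraliser of $g$) or the conjugation (so the trace computation forces $\tr(g)=0$ and $\tr(gL)=\{0\}$). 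Your route is self-contained --- it dispenses with the citation to the companion paper and is uniform in the characteristic --- whereas the paper's is shorter and makes visible the role of the polar line $M^\perp$, which is reused in Remark~\ref{rem:cases} and in the proof of Theorem~\ref{thm:main}. Two details deserve an explicit word in your write-up. First, the claim that the centraliser of $g$ is exactly $F1\oplus Fg$ needs a reason (double centraliser theorem, or: $F1\oplus Fg$ is a maximal commutative subfield, hence its own centraliser). Second, when $\Char F=2$ and $L/F$ is purely inseparable there is no nontrivial element of $\Aut(L/F)$ at all; then $\ol{(\cdot)}\,|_L=\id_L$, your dichotomy degenerates, and only the centraliser branch can occur --- this is harmless, but ``the unique nontrivial $F$-automorphism of $L$'' should not be asserted to exist unconditionally.
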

\begin{proof}
(a) Suppose that \eqref{eq:M=1+g} holds. From
Lemma~\ref{lem:kernel}, all lines of the right parallel class $\riS(M)$ are
fixed under $\lambda_g$.
\par
(b) Suppose that a line $M$ satisfies \eqref{eq:M-pencil}. The line $M^\perp$
is left parallel and right parallel to $M$ (see \cite[Cor.~4.4]{havl+p+p-19a})
and it is contained in $(F1)^\perp$. The line $gM$ is also left parallel to
$M$. As $M^\perp$ and $gM$ are incident with the plane $(F1)^\perp$, they share
a common point and so they must coincide. Taking into account that
$\lambda_g\in\Autri$ and $M^\perp\rip M$ we obtain
$\lambda_g\bigl(\riS(M)\bigr)=\riS(gM)=\riS(M^\perp)=\riS(M)$, as required.
\par
(c) Conversely, any $\lambda_g$-invariant right parallel class can be written
as $\riS(M)$ with $F1\subseteq M$. Then $\lambda_g(M)\lep M \rip \lambda_g(M)$.
Again from \cite[Cor.~4.4]{havl+p+p-19a}, there are only two possibilities.
First, $\lambda_g(M)=gM=M$, which implies $Fg\subseteq M$ and establishes
\eqref{eq:M=1+g}. Second, $\lambda_g(M)=gM=M^\perp$. From $F1\subseteq M$ we
obtain $\lambda_g(M) = M^\perp \subseteq (F1)^\perp$. Applying $\lambda_g^{-1}$
results in $M\subseteq g^{-1}(F1)^\perp$, whence \eqref{eq:M-pencil} holds.
\end{proof}

\begin{rem}\label{rem:cases}%%
 Figures \ref{fig:1} and \ref{fig:2}
depict the possible cases in Proposition~\ref{prop:left-invariant} under the
assumption $\Char F\neq 2$ and $\Char F = 2$, respectively. In all cases, there
are distinct points $F1$ and $Fg$ as well as distinct planes $(F1)^\perp$ and
$g^{-1}(F1)^\perp$. Furthermore, $(F1)^\perp \cap \bigl(g^{-1}(F1)^\perp\bigr)
= (F1\oplus Fg)^\perp$.
\par
The pictures on the left-hand side show the situation when $F1\not\subseteq
g^{-1}(F1)^\perp$ or, in other words, when $Fg\not\subseteq (F1)^\perp$, which
in turn is equivalent to $\tr(g)\neq 0$. Here there are no lines $M$ subject to
\eqref{eq:M-pencil}. The pictures on the right-hand side show the opposite
situation. Here the set of all lines $M$ that satisfy \eqref{eq:M-pencil}
comprises a pencil of lines. In detail, the circumstances are as follows.
%%%%%%%%% Figure 1 %%%%%%%%%%%%%%%%%%%%%%%%%%%%%%%%%%%%%%%%%%%%%%%%%%%%%%%%%%%
\begin{figure}[!ht]\unitlength1cm %% in order to resize change \unitlength
  \centering
  \begin{picture}(10,3.5)
  \small
    %%\put(0,0){\includegraphics[height=5\unitlength]{gitter}}
    %% left
    \put(0,0.0){\includegraphics[height=8\unitlength]{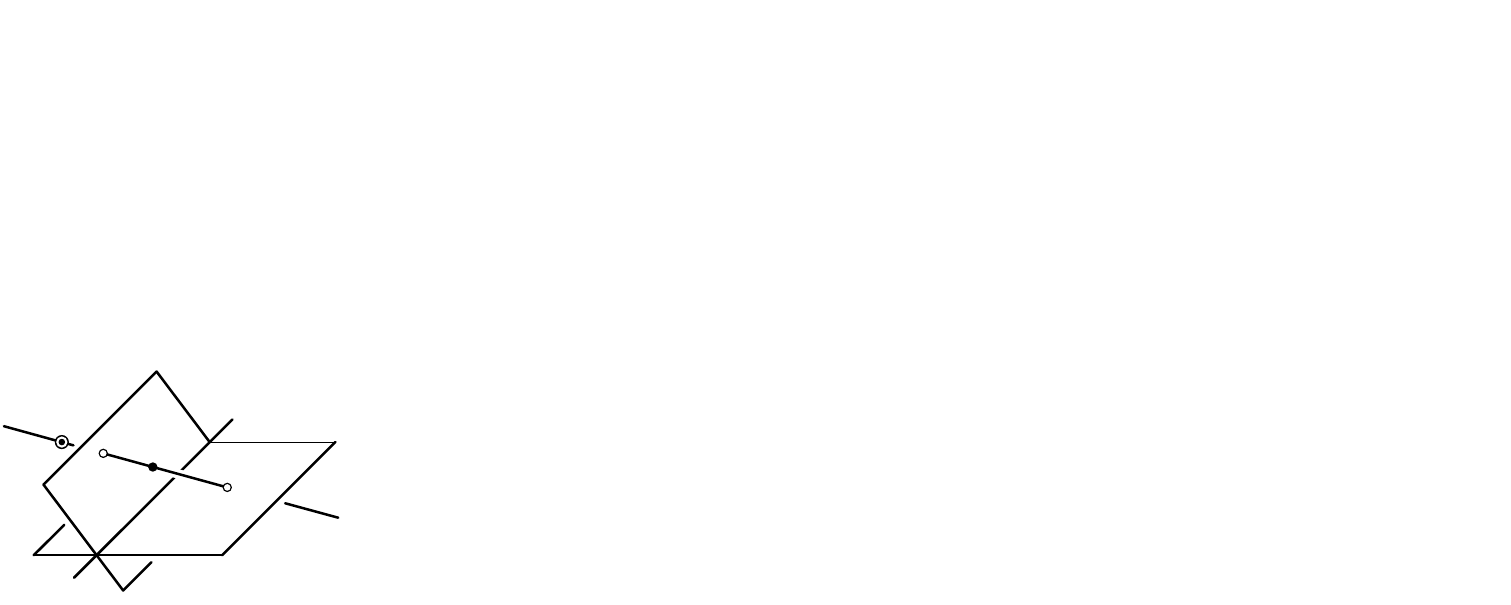}}
    \put(0.55,2.3){$F1$}
    \put(1.85,2.){$Fg$}
    \put(1.05,1.6){$Fg'$}
    \put(2.65,1.75){$F(g-\ol{g})$}
    \put(2.25,3.0){$g^{-1}(F1)^\perp$}
    \put(2.6,0.3){$(F1)^\perp$}
    \put(-0.5,0.05){$(F1\oplus Fg)^\perp$}
    %% right
    \put(5.5,0.0){\includegraphics[height=8\unitlength]{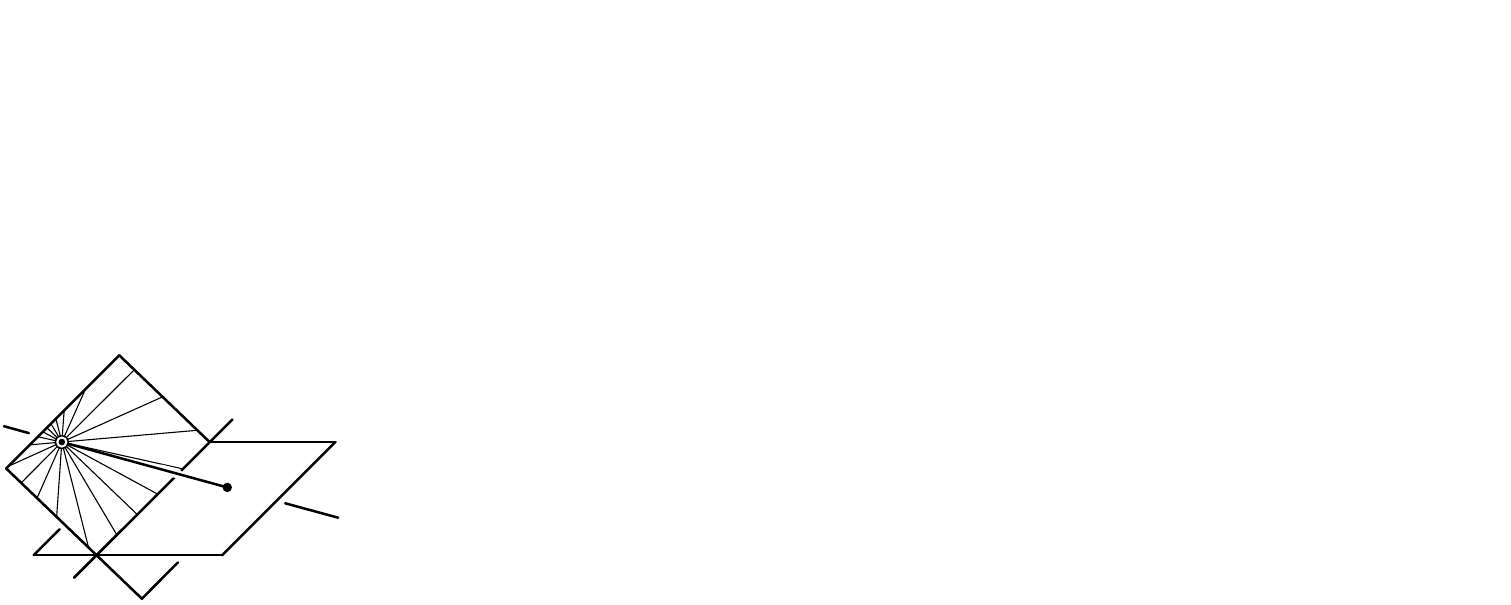}}
    \color{white}\linethickness{0.4\unitlength}
    \put(6.5,2.33){\line(1,0){0.5}}%% white box, covering the lines
    \color{black}
    \put(6.52,2.22){$F1$}%% handmade centering into box (0.2,-0.11)
    \put(8.3,1.75){$Fg$}
    \put(7.25,3.25){$g^{-1}(F1)^\perp$}
    \put(8.1,0.3){$(F1)^\perp$}
    \put(5.0,0.05){$(F1\oplus Fg)^\perp$}
  \end{picture}
  \caption{$\Char F\neq 2$}\label{fig:1}
\end{figure}
\par
Figure \ref{fig:1}, left: the line $F1\oplus Fg$ intersects the plane
$(F1)^\perp$ at $F(g-\ol{g})$ and the plane $g^{-1}(F1)^\perp$ at $Fg'$,
$g':=g^{-1}(g-\ol{g})$; the points $F1$, $Fg$, $F(g-\ol{g})$ and $Fg'$ are
mutually distinct; the lines $F1\oplus Fg$ and $(F1\oplus Fg)^\perp$ are skew.
\par
Figure \ref{fig:1}, right: $(F1\oplus Fg)\cap (F1)^\perp= Fg$, $(F1\oplus
Fg)\cap {g^{-1}(F1)^\perp} = F1$; the lines $F1\oplus Fg$ and $(F1\oplus
Fg)^\perp$ are skew.
%%%%%%%%% Figure 2 %%%%%%%%%%%%%%%%%%%%%%%%%%%%%%%%%%%%%%%%%%%%%%%%%%%%%%%%%%%
\begin{figure}[!ht]\unitlength1cm %% in order to resize change \unitlength
  \centering
  \begin{picture}(10,3.5)
  \small
    % \put(5,0){\includegraphics[height=5\unitlength]{gitter}}
    %% left
    \put(0,0.0){\includegraphics[height=8\unitlength]{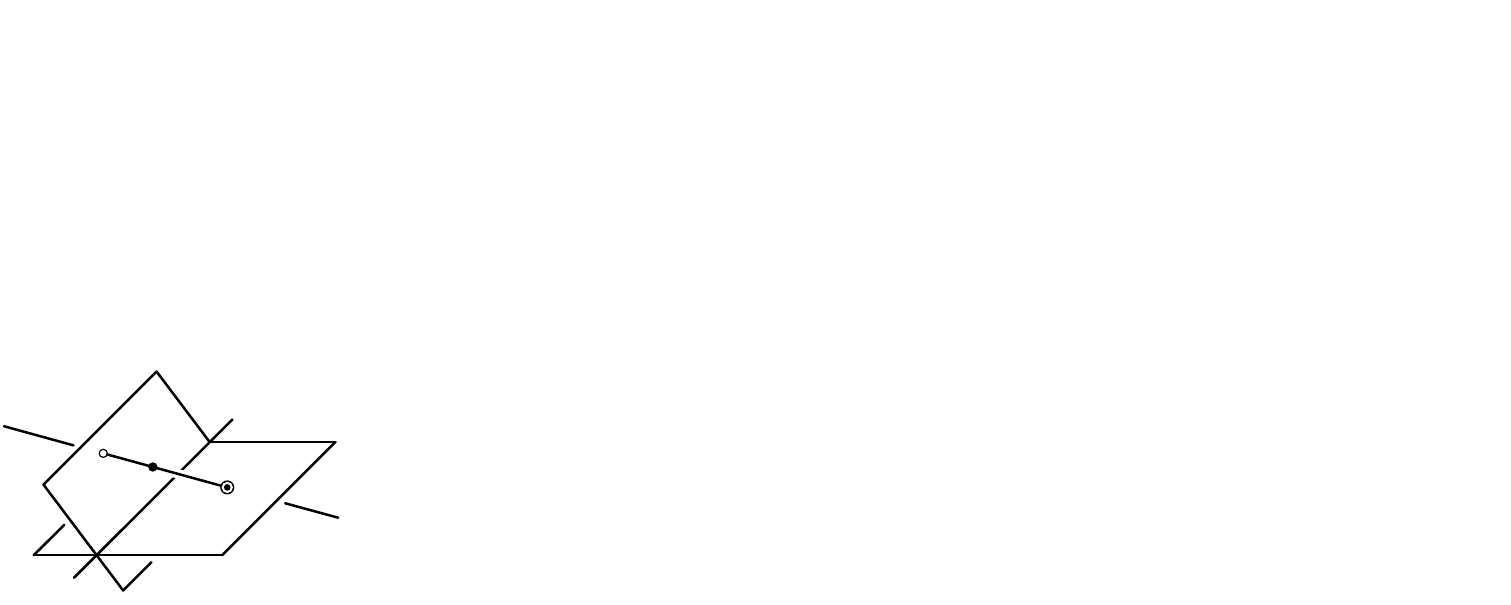}}
    \put(2.8,1.75){$F1$}
    \put(1.85,2.){$Fg$}
    \put(0.85,1.55){$Fg^{-1}$}
    \put(2.25,3.0){$g^{-1}(F1)^\perp$}
    \put(2.6,0.3){$(F1)^\perp$}
    \put(-0.5,0.05){$(F1\oplus Fg)^\perp$}
    %% right
    \put(5.5,0.0){\includegraphics[height=8\unitlength]{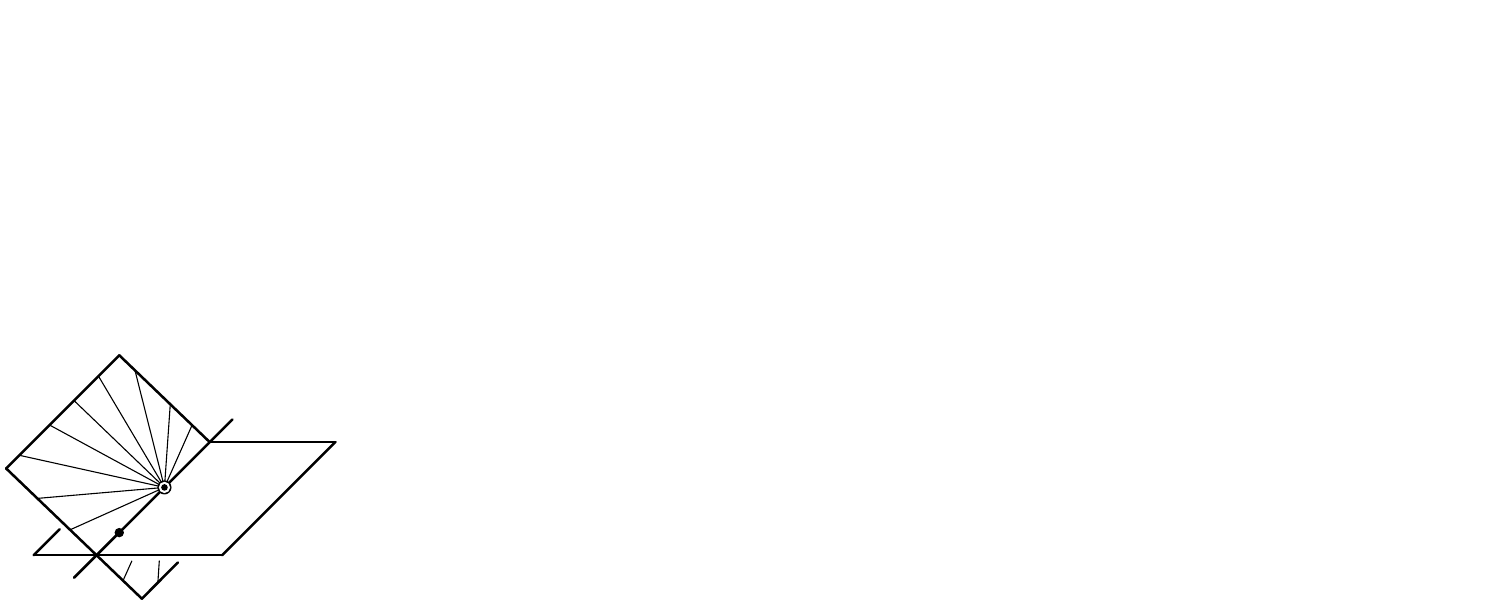}}
    %\color{white}\linethickness{0.4\unitlength}
    %\put(7.75,1.275){\line(1,0){0.5}}%% white box, covering the lines
    \color{black}
    \put(7.8,1.35){$F1$}
    \put(7.2,0.8){$Fg$}
    \put(7.25,3.25){$g^{-1}(F1)^\perp$}
    \put(8.1,0.3){$(F1)^\perp$}
    \put(5.0,0.05){$(F1\oplus Fg)^\perp$}
  \end{picture}
  \caption{$\Char F= 2$}\label{fig:2}
\end{figure}
\par
Figure \ref{fig:2}, left: $(F1\oplus Fg) \cap (F1)^\perp = F1$, $(F1\oplus Fg)
\cap g^{-1}(F1)^\perp = Fg^{-1}$; the points $F1$, $Fg$ and $Fg^{-1}$ are
mutually distinct; the lines $F1\oplus Fg$ and $(F1\oplus Fg)^\perp$ are skew.
\par
Figure \ref{fig:2}, right: the line $F1\oplus Fg$ coincides with $(F1\oplus
Fg)^\perp$.
\par
Finally, note that the situations depicted on the right-hand side, namely
$Fg\subseteq (F1)^\perp$, comprises precisely the cases when the left
translation $\lambda_g$ acts as an involution on the projective space.
\end{rem}

%%%%%%%%%%%%%%%%%%%%%%%%%%%%%%%%%%%%%%%%%%%%%%%%%%%%%%%%%%%%%%%%%%%%%%%%%%%%
\section{Main results}\label{se:main}
%%%%%%%%%%%%%%%%%%%%%%%%%%%%%%%%%%%%%%%%%%%%%%%%%%%%%%%%%%%%%%%%%%%%%%%%%%%%

The definition of a Clifford-like parallelism in \cite[Def.~3.2]{havl+p+p-19a}
is essentially based on a given projective double space $\dsp$. We are thus led
to the problem of whether or not distinct projective double spaces can share a
Clifford-like parallelism.
\begin{thm}\label{thm:double-double}
Let\/ $\dspH$ be a projective double space, where $H$ is an $F$-algebra subject
to\/ \eqref{A} or\/ \eqref{B}. Furthermore, let\/ $\lepp$ and\/ $\ripp$ be
parallelisms such that\/ $\bigl(\bPH,\lepp,\ripp\bigr)$ is also a projective
double space. Suppose that a parallelism\/ $\parallel$ of\/ $\bPH$ is
Clifford-like with respect to both double space structures. Then, possibly up
to a change of the attributes ``left'' and ``right'' in one of these double
spaces, ${\lep}={\lepp}$ and\/ ${\rip}={\ripp}$.
\end{thm}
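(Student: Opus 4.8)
The statement says two double-space structures that share a Clifford-like parallelism must essentially coincide. My approach is to reduce everything to a statement about parallel classes, and then exploit Proposition~\ref{prop:two}, which pins down a Clifford parallelism from just two of its classes. The key structural fact I would lean on is the characterisation (recalled before Proposition~\ref{prop:two}) that \emph{every} parallel class of a Clifford-like parallelism is a left class or a right class, and that all these classes are regular spreads whose kernels recover the left/right translations via Lemma~\ref{lem:kernel}.

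First I would set up notation so that the shared Clifford-like parallelism $\parallel$ decomposes its classes according to the first double space $\dspH$: by \cite[Thm.~3.1]{havl+p+p-19a} there is an $\inner{H^*}$-invariant set $\cF\subseteq\cAH$ such that $\parallel$ consists of the left classes $\leS(L)$ for $L\in\cF$ and the right classes $\riS(L)$ for $L\in\cAH\setminus\cF$. The same $\parallel$ is Clifford-like with respect to $\bigl(\bPH,\lepp,\ripp\bigr)$, so each of its classes is simultaneously a $\lepp$-class or a $\ripp$-class. The heart of the argument is therefore to compare, class by class, the two descriptions. If $\parallel$ is a proper Clifford-like parallelism it uses infinitely many classes of each of the two original parallelisms $\lep$ and $\rip$ (here I would invoke Remark~\ref{rem:infinite}, or argue directly that both $\cF$ and its complement are infinite when $\parallel$ is proper); hence $\parallel$ shares infinitely many classes with $\lep$ and infinitely many with $\rip$.

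Next I would argue that, after possibly swapping the labels ``left'' and ``right'' on the second double space, the primed and unprimed parallelisms match on these shared classes. Concretely: $\parallel$ and $\lep$ are both Clifford parallelisms sharing infinitely many (in particular at least two distinct) classes, so by Proposition~\ref{prop:two}, were $\parallel$ itself Clifford we would be done; the point is that $\parallel$ need not be Clifford, so I instead compare $\lep$ and $\lepp$ directly through $\parallel$. Each class of $\parallel$ that is a $\lep$-class is also a $\lepp$-class or a $\ripp$-class; I would show that, up to the global swap, the $\lep$-classes among $\parallel$ are exactly its $\lepp$-classes. Since $\lep$ and $\lepp$ are Clifford parallelisms sharing infinitely many classes, Proposition~\ref{prop:two} forces ${\lep}={\lepp}$, and symmetrically ${\rip}={\ripp}$.

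\textbf{The main obstacle} is controlling the possible ``crossing'' of labels: a priori a $\lep$-class of $\parallel$ could appear as a $\ripp$-class, and a $\rip$-class of $\parallel$ as a $\lepp$-class, so that the two double spaces agree only after interchanging the attributes on one side. I expect the delicate step to be ruling out an \emph{inconsistent} mixture, where some shared classes would demand ${\lep}={\lepp}$ while others demand ${\lep}={\ripp}$; this cannot happen because two distinct Clifford parallelisms share at most one class (the contrapositive of Proposition~\ref{prop:two}), so once two of the shared $\lep$-classes are identified with $\lepp$-classes, the identification ${\lep}={\lepp}$ is global and the swap (if any) is determined once and for all. The genuinely trivial branch is case \eqref{B}, where ${\lep}={\rip}={\parallel}$ is the only Clifford-like parallelism and the conclusion is immediate.
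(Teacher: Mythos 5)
Your overall strategy -- classify the $\parallel$-classes by ``kind'' with respect to each double space, find two distinct shared classes of matching kind, and invoke Proposition~\ref{prop:two} -- is the same as the paper's, which runs a pigeonhole argument on three classes $\cS(L_1),\cS(L_2),\cS(L_3)$ of the same kind w.r.t.\ $\dspH$ (obtained from an $\inner{H^*}$-orbit via \cite[Thm.~4.10]{havl+p+p-19a}). However, there is a genuine gap in how you obtain the \emph{second} equality. Your argument derives ${\lep}={\lepp}$ from classes shared by $\parallel$ and $\lep$, and then claims ``symmetrically ${\rip}={\ripp}$'' from classes shared by $\parallel$ and $\rip$. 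This only works when $\parallel$ genuinely uses classes of both parallelisms, i.e.\ when $\parallel$ is proper. But the theorem must also cover ${\parallel}={\lep}$ (or ${\rip}$): in case \eqref{A} the parallelisms $\lep$ and $\rip$ have \emph{no} parallel class in common (\cite[Cor.~4.3]{havl+p+p-19a}), so if ${\parallel}={\lep}$ then $\parallel$ shares no class whatsoever with $\rip$, and your symmetric argument has nothing to act on. The paper closes exactly this hole by proving only one equality (${\rip}={\ripp}$) from the shared classes and then invoking the fact that in a projective double space the left parallelism is uniquely determined by the right one (see \cite[pp.~75--76]{karz+k-88} or \cite[\S6]{herz-77a}); your proposal never mentions this ingredient, and without it the non-proper case does not go through.

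Two smaller remarks. First, your consistency discussion (ruling out ${\lep}={\lepp}$ and ${\lep}={\ripp}$ simultaneously) is essentially sound via the contrapositive of Proposition~\ref{prop:two}, but it becomes unnecessary once you adopt the paper's route: a single application of Proposition~\ref{prop:two} plus the determination of one parallelism by the other settles everything. Second, you only need \emph{three} distinct classes of the same kind w.r.t.\ the first structure (so that two of them agree in kind w.r.t.\ the second), not infinitely many; the infinitude of the $\inner{H^*}$-orbits from Remark~\ref{rem:infinite} is used in the paper merely to guarantee those three classes exist in case \eqref{A}.
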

\begin{proof}
First, we consider case \eqref{A}. We take any line of the star $\cAH$. We
noted in Remark~\ref{rem:infinite} that the orbit of this line under the group
$\inner{H^*}$ of all inner automorphisms of $H$ is infinite. Thus there are
three mutually distinct lines, say $L_1$, $L_2$ and $L_3$, in this orbit. From
\cite[Thm.~4.10]{havl+p+p-19a}, the $\parallel$-classes of these lines are of
the same kind w.r.t.\ $\dspH$, \emph{i.e.}, we have either
$\cS(L_{n})=\leS(L_{n})$ for all $n\in\{1,2,3\}$ or $\cS(L_{n})=\riS(L_{n})$
for all $n\in\{1,2,3\}$.
\par
Next, we turn to case \eqref{B}. There exist three mutually distinct
lines $L_1, L_2,L_3\in\cAH$. Their $\parallel$-classes are of the same kind
w.r.t.\ $\dspH$ due to ${\lep}={\rip}={\parallel}$.
\par
In both cases, the parallel classes $\cS(L_n)$, $n\in\{1,2,3\}$,
are mutually distinct. Consequently, among them there are at least two distinct
classes of the same kind w.r.t.\ the double space $(\bP(H_F),\lepp,\ripp)$. Up
to a change of notation, we may assume $\cS(L_n)=\riS(L_n)=\riS'(L_n)$ for
$n\in\{1,2\}$. Now Proposition~\ref{prop:two} shows that the Clifford
parallelisms $\rip$ and $\ripp$ coincide. This in turn forces ${\lep} =
{\lepp}$, since the left parallelism is uniquely determined by the right one
(see \cite[pp.~75--76]{karz+k-88} or \cite[\S6]{herz-77a}).
\end{proof}

\begin{cor}\label{cor:proper}
Any Clifford-like parallelism\/ $\parallel$ of\/ $\dspH$ other than\/ $\lep$
and\/ $\rip$ is not Clifford.
\end{cor}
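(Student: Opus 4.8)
The plan is to deduce this directly from Theorem~\ref{thm:double-double} by a short argument by contradiction, the only real work being to unwind the convention fixed in Section~\ref{se:clifford} about what it means for a parallelism to be ``Clifford''.

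First I would recall that, according to that convention, saying $\parallel$ is Clifford means that $\bPH$ can be endowed with parallelisms $\lepp$ and $\ripp$ making $\bigl(\bPH,\lepp,\ripp\bigr)$ into a projective double space in which $\parallel$ is one of the two Clifford parallelisms; that is, ${\parallel}={\lepp}$ or ${\parallel}={\ripp}$. The key---and entirely routine---observation is that any Clifford parallelism of a double space is in particular Clifford-like with respect to that same double space: if, say, ${\parallel}={\ripp}$, then $M\parallel N$ trivially implies $M\ripp N$, so the defining condition for a Clifford-like parallelism is met. Hence $\parallel$ is Clifford-like with respect to $\bigl(\bPH,\lepp,\ripp\bigr)$, and by hypothesis it is also Clifford-like with respect to $\dspH$.

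Next I would invoke Theorem~\ref{thm:double-double} with these two double space structures. It yields that, possibly after swapping the attributes ``left'' and ``right'' in one of them, ${\lep}={\lepp}$ and ${\rip}={\ripp}$. In particular $\{\lepp,\ripp\}=\{\lep,\rip\}$, and therefore ${\parallel}\in\{\lepp,\ripp\}=\{\lep,\rip\}$. This contradicts the assumption that $\parallel$ differs from both $\lep$ and $\rip$, which completes the proof.

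There is no genuine obstacle here: the corollary is essentially a reformulation of Theorem~\ref{thm:double-double}, and the only point that requires (trivial) care is the definitional step that a Clifford parallelism is automatically Clifford-like with respect to its own double space, so that Theorem~\ref{thm:double-double} becomes applicable to the pair of double space structures in play.
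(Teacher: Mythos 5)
Your proposal is correct and follows essentially the same route as the paper: assume $\parallel$ is Clifford, unpack the convention to obtain a second double space structure $\bigl(\bPH,\lepp,\ripp\bigr)$ with ${\parallel}\in\{\lepp,\ripp\}$, and apply Theorem~\ref{thm:double-double} to force ${\parallel}\in\{\lep,\rip\}$, a contradiction. You merely make explicit the (implicit in the paper) observation that a Clifford parallelism is automatically Clifford-like with respect to its own double space, which is needed to invoke the theorem.
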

\begin{proof}
Assume to the contrary that ${\parallel}=:{\lepp}$ is Clifford. Then there is a
parallelism, say $\ripp$, such that $\bigl(\bPH,\lepp,\ripp\bigr)$ is a
projective double space. Applying Theorem~\ref{thm:double-double} gives
therefore ${\parallel}={\lep}$ or ${\parallel}={\rip}$, a contradiction.
\end{proof}

The above corollary, when restricted to case \eqref{A}, is just a reformulation
of \cite[Thm.~4.15]{havl+p+p-19a}. Therefore, the rather technical proof in
\cite{havl+p+p-19a}, which relies on $H$ being a quaternion skew field, can now
be avoided.
\par
Our final results provide the announced characterisations of Clifford
parallelisms among Clifford-like parallelisms.

\begin{thm}\label{thm:main}
Let\/ $\parallel$ be a Clifford-like parallelism of\/ $\dspH$, where $H$ is an
$F$-algebra subject to\/ \eqref{A} or\/ \eqref{B}. Then the following
assertions are equivalent.
\begin{enumerate}
\item\label{thm:main.a} The parallelism\/ $\parallel$ is Clifford.

\item\label{thm:main.b} The parallelism\/ $\parallel$ admits an
    automorphism $\beta\in \Autp$ that stabilises all its parallel classes
    and acts as a non-identical collineation on the projective space\/
    $\bPH$.
\end{enumerate}
\end{thm}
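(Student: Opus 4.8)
The plan is to establish the two implications separately, the implication (a)$\Rightarrow$(b) being the routine one. Assume first that $\parallel$ is Clifford. By Corollary~\ref{cor:proper} it then coincides with $\lep$ or $\rip$, and after possibly interchanging the roles of ``left'' and ``right'' (Remark~\ref{rem:algebras}) I may assume ${\parallel}={\rip}$. I pick any $c\in H^*\setminus F^*$ --- such $c$ exists since $\dim_F H=4$ --- and put $\beta:=\rho_c$. For every line $N\in\cLH$ one has $\rho_c(N)=Nc\rip N$, so $\beta$ maps each right parallel class onto itself and hence stabilises all $\parallel$-classes; furthermore $\beta\in\rho(H^*)\subseteq\Autri=\Autp$, and $\beta$ is non-identical on $\bPH$ because $c\notin F$. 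The very same $\beta=\rho_c$ settles case \eqref{B}, where ${\rip}={\lep}={\parallel}$.

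For (b)$\Rightarrow$(a) I argue by contraposition, so by Corollary~\ref{cor:proper} it suffices to prove: if $\parallel$ is a \emph{proper} Clifford-like parallelism, then each $\beta\in\Autp$ that stabilises all its parallel classes induces the identity on $\bPH$. A proper parallelism forces case \eqref{A}; thus $H$ is a quaternion skew field and $\parallel$ is described by an $\inner{H^*}$-invariant set $\emptyset\neq\cF\subsetneq\cAH$, its classes being $\leS(L)$ for $L\in\cF$ and $\riS(L)$ for $L\in\cAH\setminus\cF$. By Proposition~\ref{prop:auto} I write $\beta=\lambda_g\circ\psi$ with $g\in H^*$ and $\psi\in\Aut(H/F)$, and I translate the invariance of the classes into the language of kernels. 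Combining \eqref{eq:kernel-image} and Lemma~\ref{lem:kernel} with the relations $\psi\circ\lambda_z\circ\psi^{-1}=\lambda_{\psi(z)}$ and $\psi\circ\rho_z\circ\psi^{-1}=\rho_{\psi(z)}$ (compare \eqref{eq:lambda-image}) and with $\lambda_g\circ\rho_z\circ\lambda_g^{-1}=\rho_z$, a short computation yields, for every $L\in\cAH$,
\begin{equation*}
 \beta\bigl(\leS(L)\bigr)=\leS(L)\iff\psi(L)=L,\qquad
 \beta\bigl(\riS(L)\bigr)=\riS(L)\iff\chi(L)=L,
\end{equation*}
where $\chi:=\inner{g^{-1}}\circ\psi\in\Aut(H/F)$ and $\inner{g^{-1}}(x)=gxg^{-1}$. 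Hence $\beta$ stabilises all $\parallel$-classes exactly when $\psi$ fixes (as a set) every line of $\cF$ and $\chi$ fixes every line of $\cAH\setminus\cF$.

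Everything now rests on the following claim: \emph{if $\gamma\in\Aut(H/F)$ fixes as a set every line of a single (necessarily infinite) $\inner{H^*}$-orbit $O\subseteq\cAH$, then $\gamma$ induces the identity on $\bPH$.} Granting it, I apply the claim to $\psi$ on an orbit inside the nonempty invariant set $\cF$ and to $\chi$ on an orbit inside the nonempty invariant set $\cAH\setminus\cF$; both $\psi$ and $\chi$ then act trivially on $\bPH$, whence $\inner{g^{-1}}=\chi\circ\psi^{-1}$ acts trivially, $g\in F^*$, and $\beta=\lambda_g\circ\psi$ induces the identity on $\bPH$, as desired. To prove the claim when $\gamma$ is $F$-linear, I use the theorem of Skolem--Noether to write $\gamma=\inner{w}$ and observe (via the identity $wx+xw=\tr(w)x+\tr(x)w-\li w,x\re$) that $\inner{w}$ fixes a line $L\in\cAH$ exactly when $w\in L$ or when $\tr(w)=0$ and $L\subseteq w^\perp$. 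Since $w$ lies in at most one line of $\cAH$ and $O$ is infinite, almost all lines of $O$ would have to lie in the pencil of lines through $F1$ in the plane $w^\perp$; but Proposition~\ref{prop:lineorbit} shows that an infinite $\inner{H^*}$-orbit is never contained in such a pencil (the remaining possibility, that $O$ lies inside $(F1)^\perp$ in characteristic two, forces $w^\perp=(F1)^\perp$ and hence $w\in F$). Thus $\gamma=\id$.

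The main obstacle is the \emph{semilinear} case $\sigma:=\gamma|_F\neq\id$ of the claim. Here $\gamma$ fixes $F1$ and induces on the star $\cAH=\bP(H/F1)\cong\bP^2(F)$ a $\sigma$-semilinear collineation $\bar\gamma$ fixing every point of $O$. Two observations keep this tractable: the maps $\psi$ and $\chi$ share the same $\sigma$, and the lines of $\cAH$ contained in $(F1)^\perp$ form a single pencil of $|F|+1$ lines, whereas each $\inner{H^*}$-orbit has cardinality $|F|$ (Remark~\ref{rem:infinite}); hence at least one of the two orbits fed to $\psi$ and $\chi$ avoids $(F1)^\perp$, and for it Proposition~\ref{prop:lineorbit} guarantees that $O$ lies in no line of $\bP^2(F)$. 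Thus $\bar\gamma$ fixes a spanning set of points; the delicate part is that the fixed points of a properly semilinear collineation themselves form a spanning subplane $\bP^2(\mathrm{Fix}\,\sigma)$. The plan is to rule this out by showing that such an orbit $O$ cannot be contained in any $\mathrm{Fix}(\sigma)$-rational subplane --- the anisotropic norm form of $H$ represents the square class that cuts out $O$ also in directions which are not $\mathrm{Fix}(\sigma)$-rational --- so that $\bar\gamma$ would fix a point off $\bP^2(\mathrm{Fix}\,\sigma)$, which is absurd unless $\sigma=\id$. This forces $\sigma=\id$, reduces the claim to its linear case, and completes the proof.
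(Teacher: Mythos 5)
Your direction \eqref{thm:main.a} $\Rightarrow$ \eqref{thm:main.b} is fine and agrees with the paper. The converse, however, has a genuine gap, and it sits exactly where the paper's proof does its real work. First, a smaller problem: you invoke Proposition~\ref{prop:auto} to write $\beta=\lambda_g\circ\psi$ with $\psi\in\Aut(H/F)$, but that proposition describes $\Autri$, not $\Autp$. The paper only guarantees $\Autp\cap\GL(H_F)=\Autri\cap\GL(H_F)$; for a possibly semilinear $\beta\in\Autp$ the inclusion $\beta\in\Autri$ is not available (the paper explicitly flags the semilinear situation as open/intricate). One can repair this --- $\beta$ carries the double space associated with $\parallel$ to itself, so Theorem~\ref{thm:double-double} shows $\beta$ preserves or swaps $\lep$ and $\rip$, and swapping is excluded because $\beta$ fixes setwise a left and a right $\parallel$-class while, in case \eqref{A}, no left class equals a right class --- but you do not address this at all.

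The serious gap is the semilinear case of your central claim. You yourself call it ``the main obstacle'' and then offer only a ``plan'': it rests on the assertion that the fixed points of a properly $\sigma$-semilinear collineation of the star form a spanning subplane $\bP^2(\mathrm{Fix}\,\sigma)$ (which is not true in general and is in any case unproved here), and on an unproved statement about which ``directions'' the norm form represents. In addition, the cardinality comparison ``$|F|+1$ versus $|F|$'' proves nothing when $F$ is infinite, since then $|F|+1=|F|$; the fact you actually need (that $\cF$ or its complement contains an orbit avoiding $(F1)^\perp$) must be argued from $|\cAH|>|F|+1$ together with the $\inner{H^*}$-invariance of $(F1)^\perp$. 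As it stands the argument does not close. The paper circumvents the whole semilinearity issue by a different and more economical route: it sets $\alpha:=\lambda_{\beta(1)}^{-1}\circ\beta$, shows via Proposition~\ref{prop:lineorbit} and a projective-plane argument on the star $\cAH$ that $\alpha$ fixes \emph{every} line through $F1$, whence $\alpha$ is a central collineation and therefore automatically $F$-linear; only then does it conclude $\alpha=\id_H$, $\beta=\lambda_g$, and finish with the case analysis from Proposition~\ref{prop:left-invariant}. I would recommend either adopting that order of steps (derive linearity geometrically before decomposing $\beta$) or supplying a complete proof of your semilinear claim; without one of these, the proposal is incomplete.
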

\begin{proof}
\eqref{thm:main.a} $\Rightarrow$ \eqref{thm:main.b}. There exists a $g\in
H^*\setminus F^*$.
Corollary~\ref{cor:proper} shows that ${\parallel} = {\lep}$ or
${\parallel} = {\rip}$. In the first case the left translation $\lambda_g$ has
the required properties, in the second case the same applies to the right
translation $\rho_g$.
\par
\eqref{thm:main.b} $\Rightarrow$ \eqref{thm:main.a} In case \eqref{B},
${\lep}={\rip}$ implies that ${\parallel}={\lep}$ is Clifford.
\par
From now on we deal with case \eqref{A} only. We select one line $N_1$ through
$F1$ that is not in $(F1)^\perp$. We assume w.l.o.g.\ that the parallel class
$\cS(N_1)$ is a \emph{left} parallel class. (Otherwise, we have to interchange
the attributes ``left'' and ``right'' in what follows.) Let $g:=\beta(1)$. We
consider the left translation $\lambda_g$ and the product
\begin{equation}\label{eq:alpha}
    \alpha:=\lambda_g^{-1}\circ\beta .
\end{equation}
\par
We choose one $N\in\inner{H^*}(N_1)$. Then the parallel class $\cS(N)$ is a
left parallel class. Thus
\begin{equation}\label{eq:N}
        N \lep \beta(N)\lep g^{-1}\beta(N) =\alpha(N).
\end{equation}
Formula \eqref{eq:N} and $\alpha(1)=1\in N$ together force $\alpha(N)=N$. By
Proposition~\ref{prop:lineorbit}, every plane through $N$ contains at least two
lines from the orbit $\inner{H^*}(N_1)$, and so any such plane is fixed under
$\alpha$. The lines and planes through $F1$ are the ``points'' and ``lines'' of
a projective plane; ``incidence'' is given by symmetrised inclusion. Our
$\alpha$ acts on this projective plane as a collineation. By the above, all
``lines'' through the ``point'' $N$ are fixed under $\alpha$, and so $N$ serves
as a ``centre'' of this collineation. But $N$ may vary in the orbit
$\inner{H^*}(N_1)$, which comprises more than one line by the theorem of
Cartan-Brauer-Hua \cite[(13.17)]{lam-01a}. Consequently, this collineation has
more than one ``centre'', that is, $\alpha$ fixes all lines of the star $\cAH$.
\par
We now consider the action of $\alpha$ on the projective space $\bPH$. Since
all lines of $\cAH$ are fixed, $\alpha$ acts as a perspective collineation with
centre $F1$. This implies that $\alpha$ is $F$-linear. Since $\alpha$ and
$\lambda_g^{-1}$ are $F$-linear, so is $\beta$. From $\beta\in\Autp\cap\GL(H_F)
= \Autle\cap\GL(H_F)$ (see \cite[Thm.~3.5]{havl+p+p-20a}) and
$\lambda_g^{-1}\in\Autle\cap\GL(H_F)$ follows $\alpha\in\Autle\cap\GL(H_F)$.
Now pick any line $L\in\cLH$. The left parallel line to $L$ through $F1$ is
fixed under $\alpha\in\Autle$, whence we have $L\lep\alpha(L)$. On the other
hand, $L$ is incident with at least one plane through $F1$. This plane is
$\alpha$-invariant. Therefore the left parallel lines $L$ and $\alpha(L)$ are
coplanar, which in turn implies $L=\alpha(L)$. So we arrive at $\alpha=c\id_H$
for some $c\in F^*$. Now, using $\alpha(1)=1$, we end up with $\alpha=\id_H$.
\par
Next, we give an explicit description of $\beta$. By virtue of
\eqref{eq:alpha}, our assumption that $\beta$ does not fix all lines of $\bPH$,
and $\alpha=\id_H$, we have
\begin{equation*}
    \beta =\lambda_g \mbox{~~and~~} g\in H^*\setminus F^*.
\end{equation*}
\par
Finally, we claim that ${\parallel}={\lep}$. Assume to the contrary that
${\parallel}\neq{\lep}$. So there is a line $M_1$ with $\cS(M_1)=\riS(M_1)$ and
$F1\subseteq M_1$. Then $\cS(M)=\riS(M)$ for all lines $M\in \inner{H^*}(M_1)$,
which forces
\begin{equation}\label{eq:cM_1}
    \beta\bigl(\riS(M)\bigr)=\riS(M) \mbox{~~for all~~} M\in \inner{H^*}(M_1) .
\end{equation}
We now distinguish three cases.
\par
\emph{Case}~(i). Let $F1\not\subseteq g^{-1}(F1)^\perp$. From
Proposition~\ref{prop:left-invariant} and \eqref{eq:cM_1}, any line
$M\in\inner{H^*}(M_1)$ has to satisfy \eqref{eq:M=1+g}. This implies
$\inner{H^*}(M_1) = \{F1\oplus Fg\}$ and contradicts the theorem of
Cartan-Brauer-Hua \cite[(13.17)]{lam-01a}, which says $|\inner{H^*}(M_1)|>1$.
\par
\emph{Case}~(ii). Let $F1\subseteq g^{-1}(F1)^\perp$ and $M_1\not\subseteq
(F1)^\perp$. We choose any plane $E$ other than $g^{-1}(F1)^\perp$ through the
line $M_1$. Let $\cM_{E}$ denote the set of all lines that are incident with
$E$ and belong to $\inner{H^*}(M_1)$. By Proposition~\ref{prop:lineorbit}, the
set $\cM_{E}$ is infinite. From Proposition~\ref{prop:left-invariant} and
\eqref{eq:cM_1}, any line $M\in\cM_{E}$ has to satisfy \eqref{eq:M=1+g} or
\eqref{eq:M-pencil}, that is $M=F1\oplus Fg$ or $M=g^{-1}(F1)^\perp\cap E$.
This implies $|\cM_{E}|\leq 2$, an absurdity.
\par
\emph{Case}~(iii). Let $F1\subseteq g^{-1}(F1)^\perp$ and $M_1\subseteq
(F1)^\perp$. From Remark~\ref{rem:cases}, this applies precisely when
\begin{equation}\label{eq:M1}
    M_1=F1\oplus Fg=(F1)^\perp\cap\bigl(g^{-1}(F1)^\perp\bigr) ;
\end{equation}
see the right-hand side of Figure~\ref{fig:2}. The plane $(F1)^\perp$ is
$\inner{H^*}$-invariant, whence it contains all lines of $\inner{H^*}(M_1)$.
From Proposition~\ref{prop:left-invariant} and \eqref{eq:cM_1}, any line
$M\in\inner{H^*}(M_1)$ has to satisfy \eqref{eq:M=1+g} or \eqref{eq:M-pencil}.
By virtue of the second equation in \eqref{eq:M1}, this implies
$\inner{H^*}(M_1) = \{F1\oplus Fg\}$ and, as in Case~(i), contradicts the
theorem of Cartan-Brauer-Hua.
\end{proof}

\begin{rem}
Note that, as a consequence of the previous theorem, the group of automorphisms
that preserve all parallel classes with respect to a given Clifford-like
parallelism $\parallel$ of $\dspH$ is contained in $\GL(H_F)$. Moreover this
group is the group of left translations (or right translations respectively)
precisely when ${\parallel}={\rip}$ (respectively ${\parallel}={\lep}$). If, on
the other hand, $\parallel$ is a proper Clifford-like parallelism, then this
group is the group of all $\lambda_g$ with $g\in F^*$, thus, from the
projective point of view, it comprises only the identity map.
\end{rem}

\begin{thm}\label{thm:new1}
Let\/ $\parallel$ be a Clifford-like parallelism of\/ $\dspH$, where $H$ is an
$F$-algebra subject to\/ \eqref{A} or\/ \eqref{B}. Then the following
assertions are equivalent.
\begin{enumerate}
\item\label{thm:new1.a} The parallelism\/ $\parallel$ is Clifford and\/
    ${\lep}\neq {\rip}$.

\item\label{thm:new1.b} The parallelism\/ $\parallel$ admits an
    automorphism $\beta\in \Autp$ that stabilises a single parallel class
    of\/ $\parallel$ and, furthermore, fixes all lines of this particular
    parallel class.
\end{enumerate}
\end{thm}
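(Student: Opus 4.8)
The plan is to dispose of case \eqref{B} at once and then argue entirely within case \eqref{A}, where ${\lep}\neq{\rip}$ holds automatically; there assertion \eqref{thm:new1.a} merely says that $\parallel$ is Clifford, which by Corollary~\ref{cor:proper} is equivalent to ${\parallel}\in\{{\lep},{\rip}\}$. In case \eqref{B} I would show that both assertions fail. Assertion \eqref{thm:new1.a} fails because ${\lep}={\rip}$. For \eqref{thm:new1.b}, any $\beta$ that fixes all lines of some parallel class is a left translation $\lambda_g$ by the remark following Lemma~\ref{lem:kernel}; but in case \eqref{B} the algebra $H$ is a commutative field with ${\lep}={\rip}$, so $\lambda_g=\rho_g$ sends every line to a parallel one and hence stabilises \emph{every} parallel class. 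As there is more than one class, $\beta$ cannot stabilise just a single one.

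For \eqref{thm:new1.a}$\Rightarrow$\eqref{thm:new1.b} in case \eqref{A} I would use Corollary~\ref{cor:proper} to get ${\parallel}={\rip}$ or ${\parallel}={\lep}$ and, up to interchanging the attributes ``left'' and ``right'' (Remark~\ref{rem:algebras}), treat only ${\parallel}={\rip}$. Here I would pick $g\in H^*\setminus F$ with $\tr(g)\neq 0$ (such a $g$ is readily found, since $\tr$ is a nonzero $F$-linear form) and set $\beta:=\lambda_g\in\lambda(H^*)\subseteq\Autri=\Autp$. By Lemma~\ref{lem:kernel} this $\beta$ fixes all lines of $\riS(F1\oplus Fg)$. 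Since $\tr(g)\neq 0$ means $Fg\not\subseteq (F1)^\perp$, Remark~\ref{rem:cases} tells us that no line satisfies \eqref{eq:M-pencil}; hence Proposition~\ref{prop:left-invariant} singles out $\riS(F1\oplus Fg)$ as the \emph{only} $\lambda_g$-invariant right parallel class. As ${\parallel}={\rip}$ makes every $\parallel$-class a right class, $\beta$ stabilises exactly this one class and fixes all its lines, as required.

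For the converse \eqref{thm:new1.b}$\Rightarrow$\eqref{thm:new1.a} in case \eqref{A} I would start from the given $\beta$ and note that its single stabilised class is a left or a right parallel class; up to interchanging ``left'' and ``right'' I may assume it is a right class. By the remark after Lemma~\ref{lem:kernel} this forces $\beta=\lambda_g$, and the single-class hypothesis forces $g\notin F$ (for $g\in F$ the map $\lambda_g=g\,\id_H$ induces the identity collineation and stabilises all classes). Writing the Clifford-like parallelism $\parallel$ through its defining set $\cF\subseteq\cAH$, its left classes are exactly $\leS(L)$ with $L\in\cF$. A left translation stabilises every left parallel class, hence all of these; so if $\cF\neq\emptyset$ then, $\cF$ being $\inner{H^*}$-invariant and therefore infinite (Remark~\ref{rem:infinite}), $\beta$ would stabilise infinitely many classes, contradicting the single-class hypothesis. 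Thus $\cF=\emptyset$, that is, ${\parallel}={\rip}$, which is Clifford; and in case \eqref{A} one has ${\lep}\neq{\rip}$.

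The main obstacle I foresee is the careful bookkeeping of the word ``single''. In the forward direction it hinges on using $\tr(g)\neq0$ to suppress the pencil of lines satisfying \eqref{eq:M-pencil} via Remark~\ref{rem:cases}, so that exactly one right class survives; had I carelessly chosen $g$ with $\tr(g)=0$, the translation would fix a whole pencil of classes. In the converse it rests on the complementary observation that a translation is always forced to stabilise an entire (infinite) family of classes on one side, so that a single stabilised class can occur only for the pure parallelisms $\rip$ and $\lep$; the interplay of these two facts is what pins down precisely the Clifford case with ${\lep}\neq{\rip}$.
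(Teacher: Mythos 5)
Your proof is correct and follows essentially the same route as the paper: the forward direction picks $g\in H^*\setminus\bigl(F1\cup(F1)^\perp\bigr)$ (your condition $\tr(g)\neq 0$ with $g\notin F$ is exactly this) and invokes Proposition~\ref{prop:left-invariant} to see that $\riS(F1\oplus Fg)$ is the unique invariant class, while the converse uses Lemma~\ref{lem:kernel} to force $\beta=\lambda_g$ and then the fact that a left translation stabilises every left parallel class. Your explicit disposal of case \eqref{B} and your use of the defining set $\cF$ together with Remark~\ref{rem:infinite} in the converse are only minor presentational variants of the paper's terser argument.
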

\begin{proof}
\eqref{thm:new1.a} $\Rightarrow$ \eqref{thm:new1.b}. Corollary~\ref{cor:proper}
shows that ${\parallel} = {\lep}$ or ${\parallel} = {\rip}$. Let, for example,
${\parallel}={\rip}$. We infer from ${\lep}\neq {\rip}$ that $H$ is a
quaternion skew field. There exists a $g\in H\setminus \bigl(F1 \cup
(F1)^\perp\bigr)$; cf.\ the left-hand sides of Fig.~\ref{fig:1} and
Fig.~\ref{fig:2} for illustrations. Then no line $M\in\cLH$ satisfies
\eqref{eq:M-pencil}. By Proposition \ref{prop:left-invariant},
$\beta:=\lambda_g$ stabilises a single right parallel class, namely
$\riS(F1\oplus Fg)$, and, furthermore, $\beta$ fixes all lines of
$\riS(F1\oplus Fg)$.
\par
\eqref{thm:new1.b} $\Rightarrow$ \eqref{thm:new1.a}. The only $\beta$-invariant
parallel class can be written in the form $\cS(L)$ with $L\in\cAH$. Let us
assume that $\cS(L)$ is a right parallel class. Since all lines of $\riS(L)$
are fixed under $\beta$, we obtain
$\beta\in\Kernel\bigl(H,\riS(L)\bigr)^*=\lambda(L^*)$ from
Lemma~\ref{lem:kernel}. Consequently, all left parallel classes are stabilised
under $\beta$, whence none of them is a parallel class of $\parallel$. This
shows ${\lep}\neq{\parallel}={\rip}$.
\end{proof}

\begin{thm}\label{thm:new2}
Let\/ $\parallel$ be a Clifford-like parallelism of\/ $\dspH$, where $H$ is an
$F$-algebra subject to\/ \eqref{A} or\/ \eqref{B}. Then the following
assertions are equivalent.
\begin{enumerate}
\item\label{thm:new2.a} The parallelism\/ $\parallel$ is Clifford and\/
    ${\lep}={\rip}$.
\item\label{thm:new2.b} If an automorphism $\beta\in\Autp$\/ fixes all
    lines of at least one parallel class of\/ $\parallel$, then all
    parallel classes of\/ $\parallel$ are stabilised under $\beta$ .
\item\label{thm:new2.c} The parallelism\/ $\parallel$ admits an
    automorphism $\beta\in \Autp$ that stabilises all its parallel classes,
    fixes at least one of its parallel classes linewise, and acts as a
    non-identical collineation on the projective space \/$\bPH$.
\end{enumerate}
\end{thm}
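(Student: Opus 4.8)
The plan is to prove the equivalence by running the cycle \eqref{thm:new2.a} $\Rightarrow$ \eqref{thm:new2.b} $\Rightarrow$ \eqref{thm:new2.c} $\Rightarrow$ \eqref{thm:new2.a}. The unifying observation I would exploit is that \eqref{thm:new2.a} amounts to being in case \eqref{B}: the condition ${\lep}={\rip}$ forces case \eqref{B}, and there $\parallel={\lep}={\rip}$ is the unique, hence Clifford, parallelism, so conversely case \eqref{B} yields \eqref{thm:new2.a}. Everything then rests on one contrast: in case \eqref{B} the algebra $H$ is commutative, so every $\lambda_g=\rho_g$ stabilises all parallel classes, whereas in case \eqref{A} a single left translation is far too rigid to stabilise every right parallel class.

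For \eqref{thm:new2.a} $\Rightarrow$ \eqref{thm:new2.b} I would argue in case \eqref{B}. If $\beta\in\Autp$ fixes all lines of some parallel class, I write that class as $\riS(L)$ with $L\in\cAH$ its member through $F1$. Since $\beta$ is additive and fixes each line of $\riS(L)$, it lies in $\Kernel(H,\riS(L))$, so Lemma~\ref{lem:kernel} forces $\beta=\lambda_g$ for some $g\in L^*$. As $H$ is commutative, $\lambda_g=\rho_g$ stabilises every left parallel class; since ${\lep}={\rip}={\parallel}$ here, this is exactly the conclusion of \eqref{thm:new2.b}.

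For \eqref{thm:new2.b} $\Rightarrow$ \eqref{thm:new2.c} I would manufacture a witness for the hypothesis of \eqref{thm:new2.b}. I pick any $L\in\cAH$ and any $g\in L^*\setminus F^*$; if $\cS(L)=\riS(L)$ I set $\beta_0:=\lambda_g$, and if $\cS(L)=\leS(L)$ I set $\beta_0:=\rho_g$. By Lemma~\ref{lem:kernel} and its left-right dual, $\beta_0\in\Autp$ fixes the parallel class $\cS(L)$ linewise, and it is a non-identical collineation because $g\notin F$. Statement \eqref{thm:new2.b} then upgrades this to $\beta_0$ stabilising all parallel classes, which is precisely \eqref{thm:new2.c}.

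The crux is \eqref{thm:new2.c} $\Rightarrow$ \eqref{thm:new2.a}, and I expect the main obstacle to be the exclusion of case \eqref{A}. From the part of \eqref{thm:new2.c} asserting that $\beta$ stabilises all classes and acts non-trivially, Theorem~\ref{thm:main} gives that $\parallel$ is Clifford, and Corollary~\ref{cor:proper} yields $\parallel\in\{{\lep},{\rip}\}$; after possibly interchanging the attributes ``left'' and ``right'' I assume $\parallel={\rip}$. Arguing by contradiction, I suppose case \eqref{A}. The parallel class fixed linewise by $\beta$ is some $\riS(M)$, so Lemma~\ref{lem:kernel} forces $\beta=\lambda_g$ with $g\in H^*\setminus F^*$ by non-triviality. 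But \eqref{thm:new2.c} requires $\lambda_g$ to stabilise every right parallel class $\riS(M')$, $M'\in\cAH$, whereas Proposition~\ref{prop:left-invariant} shows the stabilised ones correspond only to the single line $F1\oplus Fg$ together with the pencil of lines through $F1$ in the plane $g^{-1}(F1)^\perp$. Since $\cAH$ is the full star through $F1$, whose members form a projective plane over $F$ and hence are not covered by one line together with one pencil, this is absurd. Therefore ${\lep}={\rip}$, we are in case \eqref{B}, and \eqref{thm:new2.a} holds.
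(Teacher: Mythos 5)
Your proposal is correct and follows essentially the same route as the paper: the same cycle \eqref{thm:new2.a}~$\Rightarrow$~\eqref{thm:new2.b}~$\Rightarrow$~\eqref{thm:new2.c}~$\Rightarrow$~\eqref{thm:new2.a}, with Lemma~\ref{lem:kernel} identifying the linewise-fixing automorphism as a translation and Proposition~\ref{prop:left-invariant} producing the contradiction in case~\eqref{A}. The only cosmetic difference is that in \eqref{thm:new2.c}~$\Rightarrow$~\eqref{thm:new2.a} you pin down ${\parallel}={\rip}$ via Corollary~\ref{cor:proper} and a left--right symmetry, where the paper instead cites the fact that $\lep$ and $\rip$ share no parallel classes when distinct; both are sound.
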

\begin{proof}
\eqref{thm:new2.a} $\Rightarrow$ \eqref{thm:new2.b}. We have
${\parallel}={\lep}={\rip}$. Let $\beta\in\Autp$ fix all lines of a
right\footnote{We use the attributes ``left'' and ``right'' in accordance with
the general situation, as described elsewhere. Of course, the distinction
between ``left'' and ``right'' is immaterial here.} parallel class, which will
be written as $\riS(L)$ with $L\in\cAH$. From Lemma~\ref{lem:kernel},
$\beta\in\Kernel\bigl(H,\riS(L)\bigr)^*=\lambda(L^*)$ and so $\beta$ stabilises
all left parallel classes or, said differently, all $\parallel$-classes.
\par
\eqref{thm:new2.b} $\Rightarrow$ \eqref{thm:new2.c}. We may assume w.l.o.g.\
that there exists a line $L\in\cAH$ with the property $\cS(L)=\riS(L)$. There
is a $g\in L^*\setminus F^*$. The left translation $\lambda_g=:\beta$ fixes all
lines of $\cS(L)=\riS(L)$ and acts as a non-identical collineation on $\bPH$.
So, by our assumption, $\beta$ stabilises all $\parallel$-classes. Thus $\beta$
meets all the requirements appearing in \eqref{thm:new2.c}.
\par
\eqref{thm:new2.c} $\Rightarrow$ \eqref{thm:new2.a}. We may assume w.l.o.g.\
that $\beta$ fixes all lines of a right parallel class, $\riS(L)=\cS(L)$ with
$L\in\cAH$. There are two possibilities.
\par
\emph{Case}~(i). ${\lep}\neq{\rip}$. Theorem \ref{thm:main} gives that
$\parallel$ is a Clifford parallelism of $\dspH$. From
\cite[Cor.~4.3]{havl+p+p-19a}, the parallelisms $\lep$ and $\rip$ have no
parallel classes in common. Consequently, $\riS(L)$ being one of the
$\parallel$-classes yields ${\parallel}={\rip}$. From Lemma~\ref{lem:kernel},
the given automorphism $\beta$ is a left translation $\lambda_g$ for some $g\in
H^*$. Since $\beta$ acts non-identical on $\bPH$, we have $g\in H^*\setminus
F^*$. Hence, by Proposition~\ref{prop:left-invariant}, at least one right
parallel class is not stabilised under $\beta$, a contradiction.
\par
\emph{Case}~ (ii). ${\lep}={\rip}$. Now ${\lep}={\rip}$ is the only
Clifford-like parallelism of the projective double space $\dspH$, whence
$\parallel$ turns out to be Clifford.
\end{proof}

%%%%%%%%%%%%%%%%%%%%%% BIBTEX ACTIVE %%%%%%%%%%
%\bibliographystyle{advgeom} %% plain {unsrt}
%\bibliography{ketten}
\bibliographystyle{amsplain}
%\bibliography{../parallelismi}
%%%%%%%% Bibtex output included %%%%%%%%%%%%%%%
\providecommand{\bysame}{\leavevmode\hbox to3em{\hrulefill}\thinspace}
\providecommand{\MR}{\relax\ifhmode\unskip\space\fi MR }
% \MRhref is called by the amsart/book/proc definition of \MR.
\providecommand{\MRhref}[2]{%
  \href{http://www.ams.org/mathscinet-getitem?mr=#1}{#2}
} \providecommand{\href}[2]{#2}

%%%%%%%%%%%%%%%%% author %%%%%%%%%%%%%%%%%%%%%
\noindent
Hans Havlicek\\
Institut f\"{u}r Diskrete Mathematik und Geometrie\\
Technische Universit\"{a}t\\
Wiedner Hauptstra{\ss}e 8--10/104\\
A-1040 Wien\\
Austria\\
\texttt{havlicek@geometrie.tuwien.ac.at}
\par~\par
\noindent Stefano Pasotti\\
DICATAM-Sez.\ Matematica\\
Universit\`{a} degli Studi di Brescia\\
via Branze, 43\\
I-25123 Brescia\\
Italy\\
\texttt{stefano.pasotti@unibs.it}
\par~\par
\noindent
Silvia Pianta\\
Dipartimento di Matematica e Fisica\\
Universit\`{a} Cattolica del Sacro Cuore\\
via Trieste, 17\\
I-25121 Brescia\\
Italy\\
\texttt{silvia.pianta@unicatt.it}
%%%%%%%%%%%%%%%%%%%%%%%%%%%%%%%%%%%%%%%%%%%%%%%%%%
\end{document}